\theoremstyle{plain}
\newtheorem{thm}{Theorem}[section]
\newtheorem{cor}[thm]{Corollary}
\newtheorem{lem}[thm]{Lemma}
\newtheorem{example}[thm]{Example}
\theoremstyle{definition}
\newtheorem{defi}[thm]{Definition}
\theoremstyle{remark}
\newtheorem{rem}[thm]{Remark}
\numberwithin{equation}{section}
\newcommand{\R}{\mathbb{R}}
\newcommand{\tr}{\mathrm{Tr}}
\newcommand{\defeq}{\mathrel{\mathop:}=}
\newcommand{\average}{{\mathchoice {\kern1ex\vcenter{\hrule height.4pt
width 6pt depth0pt} \kern-9.7pt} {\kern1ex\vcenter{\hrule
height.4pt width 4.3pt depth0pt} \kern-7pt} {} {} }}
\def\R{\mathbb{R}}
\begin{document}

\title[The obstacle problem for a class of degenerate fully nonlinear operators]{The obstacle problem for a class of degenerate fully nonlinear operators}

\author{Jo\~ao Vitor Da Silva}

\address{Departamento de Matem\'atica - Instituto de Ci\^{e}ncias Exatas - Universidade de Bras\'{i}lia. Campus Universit\'{a}rio Darcy Ribeiro, 70910-900, Bras\'{i}lia - Distrito Federal - Brazil.}
\email{J.V.Silva@mat.unb.br}
\address{Instituto de Investigaciones Matem\'{a}ticas Luis A. Santal\'{o} (IMAS) - CONICET (Argentine), Ciudad Universitaria, Pabell\'{o}n I (1428) Av. Cantilo s/n - Buenos Aires}
\email{jdasilva@dm.uba.ar}

\author{Hern\'an Vivas}

\address{Instituto de Investigaciones Matem\'{a}ticas Luis A. Santal\'{o} (IMAS) - CONICET (Argentine), Ciudad Universitaria, Pabell\'{o}n I (1428) Av. Cantilo s/n - Buenos Aires}
\address{Centro Marplatense de Investigaciones matem\'aticas/Conicet, Dean Funes 3350, 7600 Mar del Plata, Argentina}
\email{havivas@mdp.edu.ar}

\keywords{Free boundary problems, degenerate elliptic equations.}

\subjclass[2010]{35R35, 35J70}

\begin{abstract}
We study the obstacle problem for fully nonlinear elliptic operators with an anisotropic degeneracy on the gradient:
\[
  \left\{ \begin{array}{rcll}
  \min\left\{f-|Du|^\gamma F(D^2u),u-\phi\right\}  & = & 0 & \textrm{ in } \Omega \\
  u & = & g & \textrm{ on } \partial \Omega
  \end{array}\right.
\]
for some degeneracy parameter $\gamma\geq 0$, uniformly elliptic operator $F$, bounded source term $f$, and suitably smooth obstacle $\phi$ and boundary datum $g$. We obtain existence/uniqueness of solutions and prove sharp regularity estimates at the free boundary points, namely $\partial\{u>\phi\} \cap \Omega$. In particular, for the homogeneous case ($f\equiv0$) we get that solutions are $C^{1,1}$ at free boundary points, in the sense that they detach from the obstacle in a quadratic fashion, thus beating the optimal regularity allowed for such degenerate operators. We also prove several non-degeneracy properties of solutions and partial results regarding the free boundary.

These are the first results for obstacle problems driven by degenerate type operators in non-divergence form and they are a novelty even for the simpler prototype given by an operator of the form $\mathcal{G}[u] = |Du|^\gamma\Delta u$, with $\gamma >0$ and $f \equiv 1$.
\end{abstract}

\maketitle

\section{Introduction} \label{sec.intro}

\subsection{Motivation and main proposals}

A classical problem from Mathematical Physics refers to the equilibrium position of an elastic membrane (whose boundary is held fixed) lying on top of a given body (an obstacle) under the action of contact and/or action-at-a-distance forces, \textit{e.g.} friction, tension, air resistance and gravity. Currently, this archetype model is often called an \textit{obstacle problem}. This kind of problem also arises in the context of optimal stopping problems in control theory, when one aims at minimizing/maximizing the cost/profit of a running functional, see \cite{Eva12}. The latter (probabilistic) approach corresponds to the ``non-variational'' mathematical formulation of the problem and leads, from this perspective, to the following model: find a profile $v$ which fulfills (in a suitable sense) the \textit{constrained free boundary problem}:
\[
  \left\{ \begin{array}{rcll}
  \min\left\{f-\mathfrak{L}\, v,\,u-\phi\right\}  & = & 0 & \textrm{ in } \,\,\,\Omega \\
  u & = & g & \textrm{ on } \,\,\, \partial \Omega.
  \end{array}\right.
\]
Here $v$ is the expected value of the cost/profit function (or the shape of the membrane in the physical problem), $\mathfrak{L}$ is an elliptic operator, representing, to some extent, the heterogeneity of the media, $\phi$ is an obstacle (or physical constraint), $f$ is a bounded source datum (\textit{e.g} a gravitational force acting on the membrane) and $g$ is a fixed boundary condition (see, \cite{Fig18}, \cite[Sec.3]{Ros18} and \cite[Ch.1]{Rod} and \cite[Ch.3]{Tei07} for more motivations and applications). From a variational view-point, solutions for such a free boundary problem can be obtained by minimizing the corresponding energy functional associated to $\mathfrak{L}$ (if this one enjoys a certain structure, see \textit{e.g.} \cite{ALS15}, \cite{C1998}, \cite{FKR-O} and \cite{Rod}) within an appropriate space of functions whereas non-variational techniques include Perron's method, penalization techniques, Comparison Principle tools, Weak Harnack and Harnack inequalities, Local Maximum Principle, see \cite{BlankTeka}, \cite{BLOP}, \cite{daSLR}, \cite{L}, \cite{LS01}, \cite{LP} and \cite{Teka} for some examples.

We should remember that obstacle type problems have attracted an increasing enthusiasm of the multidisciplinary scientific community for the last five decades or so. One of the main reasons, besides their intrinsic mathematical appeal that combines tools from regularity theory for PDEs, Geometric Measure Theory, Nonlinear Potential Theory and Harmonic Analysis, is that they are ubiquitous in Sciences, Mechanics, Engineering and Industry. In fact, problems as varied as flow through porous dam, cellular membranes' permeability, optimal stopping problems in Mathematical Finance, superconductivity of bodies in mean-field models in Physics are just some examples of phenomena that appear to be well described by these type of problems. We refer the reader to \cite{Fri}, \cite{KS}, \cite{PSU} or \cite{Rod} and the references therein for instrumental surveys concerning these and other examples.

Finally, progresses of such investigations concerning obstacle problems (with divergence structure) brought out not only significant theoretical breakthroughs, they also have proved to be of crucial importance in a wide range of models in affine areas like mathematical programming, control and optimization theory, game theoretical methods in PDEs among others, see \cite{Fig18}, \cite{Rod} and \cite{Ros18} for some examples and applications.

In the sequel, we will take a mathematical tour by the recent developments regarding inhomogeneous elliptic obstacle problems in \textit{non-divergence form}. In this direction, we must quote \cite{BLOP}, where the authors study the following elliptic obstacle problems:
$$
  \left\{
  \begin{array}{rcll}
  a^{ij}(x)D_{ij} u &\leq & f & \textrm{ in } \Omega \\
  (a^{ij}(x)D_{ij} u-f)(u-\psi) & = & 0 & \textrm{ in } \Omega \\
  u&\geq &\psi&\textrm{ in } \Omega\\
  u & = & 0 & \textrm{ on } \partial \Omega,
\end{array}\right.
$$
and
$$
  \left\{
  \begin{array}{rcll}
  F(x, D^2 u) &\leq & f & \textrm{ in } \Omega \\
  (F(x, D^2 u)-f)(u-\psi) & = & 0 & \textrm{ in } \Omega \\
  u&\geq &\psi&\textrm{ in } \Omega\\
  u & = & 0 & \textrm{ on } \partial \Omega,
\end{array}\right.
$$
where the coefficient matrix $(a^{ij}(x))_{i, j=1}^n$ and the fully nonlinear operator $F: \Omega \times \text{Sym}(n) \to \R$ are supposed to be uniformly elliptic (with $X \mapsto F(x, X)$ a convex mapping). In such a scenario, they investigate existence/uniqueness and regularity properties of solutions. Specifically, weighted Calder\'{o}n-Zygmund estimates for solutions of elliptic problems with discontinuous coefficients and irregular obstacles (see \textit{e.g.} \cite[Theorems 3.3 and 4.4]{BLOP} for details). They also establish Morrey type estimates for the Hessian, as well as H\"{o}lder continuity of the gradient of the solutions (see \cite[Theorems 5.1, 5.2 and 5.4]{BLOP} for more details). On the other hand, Blank and Teka in \cite{BlankTeka} (see also \cite{Teka} for a rather complete exposition) deal with strong solutions $w\geq 0$ of an obstacle problem of the form
$$
    \mathfrak{L}\, w(x) = a^{ij}(x)D_{ij} w(x) = \chi_{\{w>0\}} \quad \text{in} \quad B_1,
$$
In such a context, by assuming that $a^{ij} \in \text{VMO}(\Omega)$ (and uniform ellipticity), the authors prove existence of nontrivial solutions, non-degeneracy and optimal regularity of solutions. Finalizing this journey, the works \cite{L}, \cite{LS01} and \cite{LP} address a complete study on obstacle type problems in the fully nonlinear scenario with homogeneous obstacles and/or source terms and their corresponding regularity theories of solutions and free boundaries.

In spite of the fact that there is a huge amount of literature on obstacle problems in divergence form and their qualitative features (see \cite{ALS15}, \cite{C77}, \cite{C1998}, \cite{CK}, \cite{KS}, \cite{LS03},  \cite{Rod} and \cite{Tei07}), quantitative counterparts for non-variational (elliptic) models with degenerate nature are far less studied due to the rigidity of the structure of such operators (cf. \cite{daSLR} for an enlightening example). Therefore, the treatment of such free boundary problems requires the development of new ideas and modern techniques. This lack of investigation has been our main impetus in studying fully nonlinear obstacle type models with non-uniformly elliptic (anisotropic) structure, which focus on a modern, systematic and non-variational approach for such a general class of operators.

Taking into account the mathematical (physical) model used for the class of obstacle type problems is flexible and applicable to a broad spectrum of contexts, this manuscript is concerned with existence/uniqueness, regularity and geometric issues for problems governed by second order fully nonlinear elliptic equations of degenerate type as follows:
\begin{equation}\label{Eq1}
  \left\{ \begin{array}{rcll}
  \min\left\{f-|Du|^\gamma F(D^2u),u-\phi\right\}  & = & 0 & \textrm{ in } \Omega \\
  u & = & g & \textrm{ on } \partial \Omega,
\end{array}\right.
\end{equation}
where $\Omega \subset \R^n$ is a smooth, open and bounded domain, $\phi$ and $g$ are suitably smooth functions defined in $\Omega$ and $\partial \Omega$ respectively, $f$ is a continuous and bounded function in $\Omega$, $\gamma\geq 0$ and $F$ is a second order fully nonlinear uniformly elliptic operator. We recall that, for second order operators, \textit{uniform ellipticity} means that for any pair of matrices $X,Y\in\mathbb{R}^{n\times n}$
\begin{equation}\label{eq.pucci}
  \mathcal{M}_{\lambda, \Lambda}^-(X-Y)\leq F(X)-F(Y)\leq\mathcal{M}_{\lambda, \Lambda}^+(X-Y)
\end{equation}
where $\mathcal{M}_{\lambda, \Lambda}^-$ and $\mathcal{M}_{\lambda, \Lambda}^+$ are the \textit{Pucci extremal operators} given by
\[
   \mathcal{M}_{\lambda, \Lambda}^-(X)=\lambda\sum_{e_i>0}e_i+\Lambda\sum_{e_i<0}e_i\quad\textrm{ and }\quad \mathcal{M}_{\lambda, \Lambda}^+(X)=\Lambda\sum_{e_i>0}e_i+\lambda\sum_{e_i<0}e_i
\]
for some \emph{ellipticity constants} $0<\lambda\leq \Lambda< \infty$ (here $\{e_i\}_i$ are the eigenvalues of $X$). Throughout this work we will often refer to \eqref{Eq1} as the $(F, \gamma)$-obstacle problem.

It is noteworthy that our contributions extend (regarding non-variational scenario) \cite{ALS15}, as well as generalize, former seminal results (sharp regularity estimates) from \cite{BlankTeka}, \cite{BLOP} and \cite{Teka}, and to some extent, of those from \cite{L}, \cite{LS01} and \cite{LP} by making using of different approaches and techniques adapted to the general framework of the fully nonlinear (anisotropic) models.

Finally, to the best of the authors' knowledge, the results presented here comprise the first known results of obstacle problems driven by degenerate equations in non-divergence form, and they are new even for simpler (linear second order operators) such as the Laplacian, \textit{e.g.} for an operator of the form $\mathcal{G}[u] = |Du|^\gamma\Delta u$.

\subsection{Fully nonlinear operators and their main intricacies}

Degenerate operators with an anisotropic degeneracy on the gradient such as the one appearing in \eqref{Eq1} have interest both from the point of view of applied sciences and engineering and from a pure PDE perspective. We recommend the reader Birindelli and Demengel's works \cite{BerDem}, \cite{BeDe1}, \cite{BD2} and \cite{BD3} for a number of examples of degenerate fully nonlinear operators with similar structural properties. In fact, it is worth pointing out that the operator $\mathcal{G}(Du, D^2u) = |Du|^\gamma F(D^2u)$ is the simplest example of a more general class of operators dealt with in the aforementioned papers. Our results could be extended without (much) effort to that broader class (see, \cite{ART15} and \cite{daSLR}), but with decided to stick with $\mathcal{G}(Du, D^2u) = |Du|^\gamma F(D^2u)$ for the sake of simplicity and for ease of exposition.

It is worth highlighting that some of the major difficulties in dealing with our class of operators are: its non-divergence structure, in consequence, we are not allowed to make use of (nowadays) classical estimates from potential theory (cf. \cite{ALS15}, \cite{CK}), and the degeneracy character that forces diffusion properties (\textit{e.g.}, uniformly ellipticity of operator) to collapse along an \textit{a priori} unknown set of singular points of solution (a ``nonphysical free boundary''), namely the set:
$$
  \mathcal{S}_0 \defeq \{x\in \Omega : |Du(x)| = 0\}.
$$

These features produce significant constraints on the regularity that can be expected for solutions to such operators. Indeed, this is true even for non-degenerate and translation invariant operators. More precisely, it is known that viscosity solutions for fully nonlinear uniformly elliptic equations (with ``frozen'' coefficients)
\begin{equation}\label{EqF}
  F(x_0, D^2u)=0\quad\textrm{ in } \quad \Omega
\end{equation}
are locally $C^{1, \alpha_F}$, for a constant $\alpha_F \in (0,  1)$ that depends only on dimension and ellipticity constants (see \cite{CC95}). Of course, the primary question is whether solutions of such equation are smooth enough to be classical, i.e. at least $C^2$. Through the journey of finding these classical solutions, the result of Evans \cite{Ev82} and Krylov \cite{Kry83} was a pioneer paramount research on operators in non-divergence form; it states that under a concavity (or convexity) assumption on $F$, solutions of \eqref{EqF} are locally $C^{2, \alpha_0}$ for some $0 < \alpha_0 < 1$. The question of whether \emph{any} fully nonlinear elliptic operator would enjoy a $C^2$ \textit{a priori} regularity theory eluded the mathematical community for the last three decades and in effect, the Nadirashvili-Vl\u{a}du\c{t}'s counterexamples to $C^{1,1}$ regularity in \cite{NV1}, \cite{NV2}, \cite{NV3}, \cite{NV4} and \cite{NV5} give a negative answer to such a challenging question.

In our (degenerate) case the situation is considerably more involved, and thus present additional challenges.  We are dealing with equations of the form
\begin{equation}\label{eq.deg}
|D u|^\gamma F(D^2 u)=f(x)\quad\textrm{ in } \Omega.
\end{equation}
Simple examples (see, \cite[Section 3]{ART15} and \cite[Example 1]{IS}) show that, even for smooth right hand side, solutions are not better than $C_{\text{loc}}^{1,\frac{1}{\gamma+1}}$ in general (even if $F$ is concave/convex). For $f\in L^\infty(\Omega)$, Imbert and Silvestre showed in \cite[Theorem 1]{IS} that solutions to \eqref{eq.deg} are $C^{1, \alpha}$ for some (small) $\alpha$ in the interior of $\Omega$. Afterwards in \cite[Theorem 3.1]{ART15}, Ara\'ujo, Ricarte and Teixeira showed that in fact, given \emph{any} $\alpha\in (0,\alpha_F)\cap\left(0,\frac{1}{\gamma+1}\right]$ it is possible to show that $u\in C_{\text{loc}}^{1,\alpha}(\Omega)$, even for a more general class of operators satisfying natural structural conditions. As a matter of fact, from their result the (optimal) $C^{1,\frac{1}{\gamma+1}}$ interior regularity follows when $F$ is concave/convex (see, \cite[Corollary 3.2]{ART15}).

For our work, we consider the degeneracy exponent $\gamma>0$ to be given, as well as the boundary data $g\in C^{1,\beta}(\partial \Omega)$ and the obstacle $\phi\in C^{1,\beta}(\overline \Omega)$ for some $\beta\in(0,1]$. We define the optimal exponent
\begin{equation}\label{sharpEx}
\alpha \defeq \min\left\{\frac{1}{\gamma+1}, \beta\right\}.
\end{equation}

Further, we will assume that
\[
F\text{ is convex, satisfies \eqref{eq.pucci} and } F(0)=0.
\]
As discussed above, convexity is both a natural and important condition on $F$, otherwise the regularity constraints appear even in the uniformly elliptic, non degenerate and unconstrained case; as we are interested in the free boundary problem for the degenerate operator, we want to avoid unnecessary restrictions of this kind. Clearly all of our results are valid if $F$ is concave instead of convex. Also, the normalization $F(0)=0$ is assumed without loss of generality.

Then, we are interested in studying qualitative/quantitative features to the obstacle problem:
\begin{equation}\label{eq.obs}
\left\{ \begin{array}{rcll}
	u & \geq & \phi & \textrm{ in } \Omega \\
    |D u|^\gamma F(D^2u)   & \leq & f & \textrm{ in } \Omega \\
        |D u|^\gamma F(D^2u)  & = & f & \textrm{ in } \Omega\cap\{u>\phi\} \\
  u & = & g & \textrm{ on } \partial \Omega.
\end{array}\right.
\end{equation}

A more delicate mathematical matter on \eqref{eq.obs} (besides existence of solutions) is whether one can get $u\in C_{\textrm{loc}}^{1,\beta}(\Omega)$, that is if we can ``transmit'' the regularity across the ``physical'' free boundary $\partial \{u>\phi\}$. Crucial in the way to prove such a result is to obtain fine information about the behavior of the solution near free boundary points. Once this issue has been settled it is of interest to have some geometric information about free boundary itself; a natural first step is to prove that it has zero Lebesgue measure. We give a partial result in that direction as well.

Geometric regularity for equations as the ones studied here have been subject to much interest in the PDE/Analysis community in the last years, not only for its generality and several applications, but specially for its innate relation with a number of relevant free boundary problems in the literature (cf. \cite{ALS15}, \cite{CDV}, \cite{daSLR}, \cite{daSRS19}, \cite{daSS18} \cite{FKR-O} and \cite{LS03} for some variational and non-variational examples). For this reason, understanding the ``intrinsic geometry'' of the former model is an important step in comprising the behavior of solutions near their free boundary points.

Finally, it is worth noticing that obtaining quantitative/qualitative information of the solution close to the free boundary is a delicate matter, as well as it play a pivotal role for addressing a number of analytic and geometric issues in free boundary problems such as blow-up analysis, Liouville type results, free boundary regularity, geometric growth estimates, just to mention a few (see, \cite{BlankTeka}, \cite{daSLR}, \cite{daSRS19}, \cite{daSS18} and \cite{L} for some examples).

\subsection{Statement of main results}

In this section we present the main results in this manuscript. The sharp regularity exponent $\alpha$ is going to be fixed throughout and is defined by \eqref{sharpEx}. Our first main result assures that we are able to obtain a viscosity solution to $(F, \gamma)$-obstacle problem in $\Omega$ under suitable assumptions on the data and, moreover, these solutions enjoy a ``basic'' (in the sense that is not optimal) regularity estimate.

\begin{thm}[{\bf Existence of solutions with basic regularity}]\label{Thm1} Let $\phi \in C^{1, 1}(\Omega)$, $f \in C^0(\overline{\Omega})$ and $g\in C^{1,\beta}(\partial\Omega)$, $\beta\in(0,1]$. Then, there exists a viscosity solution $u$ to the $(F, \gamma)$-obstacle problem in $\Omega$. Moreover, $u \in C^{1,\beta^{\prime}}(\Omega)$ for some $\beta^{\prime}\ll 1$ and
\[
\displaystyle \|u\|_{C^{1, \beta^{\prime}}(\Omega)} \leq C\left(n, \lambda, \Lambda, \gamma, \|\phi\|_{C^{1, 1}(\Omega)}, \|f\|_{L^\infty(\Omega)}, \|g\|_{C^{1,\beta}(\partial\Omega)}\right).
\]
\end{thm}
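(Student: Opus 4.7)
My plan is to prove existence by a Perron-type argument tailored to the obstacle constraint, and then extract the claimed $C^{1,\beta'}$ regularity from the $C^{1,\alpha}$ theory of \cite{IS,ART15} applied to the two-sided viscosity inequalities that the solution inherits from the obstacle condition.

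For existence I would consider the class
\[
\mathcal{A} \defeq \left\{ v \in C(\overline\Omega) \, : \, v \geq \phi \text{ in } \Omega,\ v \geq g \text{ on } \partial\Omega,\ |Dv|^\gamma F(D^2v) \leq f \text{ in the viscosity sense} \right\}
\]
and set $u(x) \defeq \inf_{v \in \mathcal{A}} v(x)$. The first step is to exhibit an element of $\mathcal{A}$, which can be done either by solving a Dirichlet problem $|Dv|^\gamma F(D^2 v) = h$ for a sufficiently negative $h$ (existence in the continuous class being available from \cite{BeDe1,IS}) or by writing down an explicit quadratic supersolution built on top of $\phi$. Invoking the comparison principle for the degenerate equation from \cite{IS}, the standard Perron machinery yields that the upper semicontinuous envelope $u^*$ is a subsolution of the obstacle problem and the lower semicontinuous envelope $u_*$ is a supersolution; comparison then forces $u^* = u_* = u$. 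Attainment of the boundary datum is checked via explicit barriers built from $\phi$ and the $C^{1,\beta}$ extension of $g$ into $\Omega$.

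For the regularity step, I would show that $u$ satisfies, in the viscosity sense,
\[
-K \leq |Du|^\gamma F(D^2 u) \leq \|f\|_\infty
\]
with $K = K(\gamma,\lambda,\Lambda,\|f\|_\infty,\|\phi\|_{C^{1,1}})$. The upper bound is immediate from $u\in\mathcal{A}$. For the lower bound: at any point in the noncoincidence set $\{u>\phi\}$ the solution already satisfies the full equation; at a contact point $x_0$ where $u(x_0)=\phi(x_0)$, any $C^2$ test function $\psi$ touching $u$ from above necessarily touches $\phi$ from above at $x_0$, so by the paraboloid-touching characterization of $C^{1,1}$ functions one has $D\psi(x_0)=D\phi(x_0)$ and $D^2\psi(x_0)\geq -\|\phi\|_{C^{1,1}} I$. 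Ellipticity together with the normalization $F(0)=0$ then gives $F(D^2\psi(x_0))\geq -n\Lambda\|\phi\|_{C^{1,1}}$, and multiplication by $|D\phi(x_0)|^\gamma \leq \|\phi\|_{C^1}^\gamma$ yields the claimed lower bound on $|D\psi|^\gamma F(D^2\psi)(x_0)$. With these two-sided Pucci-type bounds in hand, the interior $C^{1,\alpha}$ estimates of \cite{IS,ART15} produce $u\in C^{1,\beta'}_{\text{loc}}(\Omega)$ with the quantitative control asserted; the global $C^{1,\beta'}$ estimate up to $\partial\Omega$ follows by combining the interior estimate with boundary Hölder regularity obtained through barrier constructions that exploit the $C^{1,\beta}$ smoothness of $g$.

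The hardest point, I expect, is the verification that the Perron candidate $u$ is indeed a viscosity subsolution of the obstacle problem: the gradient-degeneracy coupled with the free boundary makes the usual doubling-of-variables arguments technically delicate precisely where $|Du|$ may vanish and $u$ simultaneously touches $\phi$. A related secondary difficulty is upgrading the interior $C^{1,\beta'}$ estimate to a global one with the correct dependence on $\|g\|_{C^{1,\beta}(\partial\Omega)}$, which requires a boundary version of the $C^{1,\alpha}$ theory for the degenerate operator; this can be handled by flattening $\partial\Omega$ and adapting the barrier arguments available for the uniformly elliptic case, but the combination with the obstacle constraint is the point where most care will be needed.
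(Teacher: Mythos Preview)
Your approach is genuinely different from the paper's and, at the level of an outline, sound. The paper proves existence by a \emph{penalization scheme}: for each $\varepsilon>0$ it solves $|Du_\varepsilon|^\gamma F(D^2u_\varepsilon)=f+\zeta_\varepsilon(u_\varepsilon-\phi)$ (first with a truncated penalty, via Schauder's fixed point theorem and the global $C^{1,\beta'}$ estimates of Birindelli--Demengel \cite{BD2}), then obtains a uniform lower bound on the penalty by touching $u_\varepsilon-\phi$ from below at its minimum and using $\phi\in C^{1,1}$, and finally passes to the limit by Arzel\`a--Ascoli. The paper itself remarks that Perron's method is a legitimate alternative. What each route buys: penalization produces $C^{1,\beta'}$ approximants from the outset, so the regularity and boundary behaviour come directly from \cite{BD2} at the approximate level and survive the limit; your Perron route is conceptually more direct for existence, and your two-sided inequality $-K\le |Du|^\gamma F(D^2u)\le \|f\|_\infty$ (the contact-point argument using $\phi\in C^{1,1}$ is correct and is, morally, the same computation the paper does to bound the penalty term) reduces the regularity question to the unconstrained theory.

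Two points deserve tightening. First, your passage from the two-sided viscosity inequalities to a global $C^{1,\beta'}$ estimate is the weakest link: \cite{IS} and \cite{ART15} are \emph{interior} results, and your proposed barrier-plus-interior patching is exactly what \cite{BD2} already packages as a global theorem; you should invoke \cite[Theorem~1.1]{BD2} directly rather than re-deriving it. Second, in the Perron step you write ``comparison then forces $u^*=u_*$'', citing \cite{IS}; the relevant comparison principle is rather in \cite{BerDem,BD2} (stated in the paper as Theorem~\ref{comparison principle}), and for $b\equiv 0$ it requires one side to be a \emph{strict} sub/supersolution. This is a genuine subtlety for the degenerate operator when $f$ has no sign, and it is why the paper only claims uniqueness under $\inf f>0$ or $\sup f<0$ (or $f\equiv 0$ via the Cutting Lemma). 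For mere existence your Perron argument can be completed without full comparison (the bump lemma and barriers suffice), but you should not present $u^*=u_*=u$ as an immediate consequence of comparison.
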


\begin{rem}
Alternatively, one could prove existence of solutions via Perron's method (see, for instance \cite{CIL} or \cite{L}). This approach has the advantage of not requiring any smoothness on the obstacle other than continuity and the downside that it produces a solutions which is just continuous. Since from our perspective the main interest in this problem lies on the regularity issues, in particular the optimal regularity which is achieved when the obstacle is $C^{1,1}$, we opted for the more accurate penalization scheme described in Section \ref{sec.ex}.
\end{rem}

The next result establishes an optimal growth estimate at free boundary points. It is somewhat finer than the previous one. In effect, it states that if the obstacle enjoys $C^{1,\beta}$ regularity and the source term is bounded, then solutions to the $(F, \gamma)$-obstacle problem in $\Omega$ are locally in $C^{1,\alpha}$ (with $\alpha$ given by \eqref{sharpEx}) at free boundary points. By way of interpretation,  the solution leaves the obstacle precisely in a $C^{1, \alpha}$ fashion. As mentioned before, it is of particular interest is the optimal case when $\phi$ is $C^{1,1}$ and $f\equiv 0$, where we obtain quadratic growth for solutions at the free boundary (see Corollary \ref{Cor2}).

\begin{thm}[{\bf Optimal regularity}]\label{MThm1} Let $\beta \in (0, 1]$ and $u$ be a bounded viscosity solution to the $(F, \gamma)$-obstacle problem in $\Omega$ with obstacle $\phi \in C^{1, \beta}(\Omega)$ and $f \in L^{\infty}(\Omega)$ and let $\alpha$ be defined by \eqref{sharpEx}. Then, $u$ is $C^{1, \alpha}$ regular at free boundary points. More precisely, for any $\tilde{\Omega}\subset\subset\Omega$ and $x_0 \in \partial \{u>\phi\} \cap \tilde{\Omega}$ and for $r$ small enough there holds
\begin{equation}\label{EqSRE}
       \displaystyle \sup_{B_r(x_0)} \frac{|u(x)-(u(x_0)+ D u(x_0)\cdot (x-x_0))|}{r^{1+\alpha}}\leq C.\left(\|\phi\|_{C^{1, \beta}(\Omega)}^{\gamma+1}+\|f\|_{L^{\infty}(\Omega)}\right)^{\frac{1}{\gamma+1}},
\end{equation}
where $C>0$ is a universal constant. In particular,
\begin{equation}\label{detach}
\sup_{B_r(x_0)} \frac{|u(x)-\phi(x)|}{r^{1+\alpha}}\leq C^{\ast}.\left(\|\phi\|_{C^{1, \beta}(\Omega)}^{\gamma+1}+\|f\|_{L^{\infty}(\Omega)}\right)^{\frac{1}{\gamma+1}},
\end{equation}
where $C^{\ast}>0$ is a universal constant, i.e. $u$ detaches from the obstacle at the speed dictated by the obstacle's regularity.
\end{thm}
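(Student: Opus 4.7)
The plan is to prove \eqref{EqSRE} by a contradiction-compactness-iteration scheme adapted to the degenerate structure of \eqref{eq.obs}: the regularity at a free boundary point is transferred from the $C^{2,\alpha_0}$ regularity of $F$-harmonic functions (granted by Evans--Krylov, since $F$ is convex) via a suitable approximation lemma. The lower bound $u - P_{x_0} \geq -C r^{1+\alpha}$ is immediate from $u \geq \phi$ and the $C^{1,\beta}$-regularity of $\phi$, so the whole argument focuses on the upper bound. At a free boundary point $x_0$ one has $u(x_0)=\phi(x_0)$, and Theorem \ref{Thm1} forces $Du(x_0)=D\phi(x_0)$. Translating $x_0$ to the origin, subtracting the common linear part from $u$ and $\phi$, and dividing by $\Theta \defeq (\|\phi\|_{C^{1,\beta}(\Omega)}^{\gamma+1}+\|f\|_{L^\infty(\Omega)})^{1/(\gamma+1)}$, I may assume $u(0)=\phi(0)=0$, $Du(0)=D\phi(0)=0$, and $\|\phi\|_{C^{1,\beta}}, \|f\|_{L^\infty}\leq 1$, the operator $F$ being replaced by another element $\tilde F(X)=\Theta^{-1}F(\Theta X)$ of the same uniformly elliptic convex class. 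The target then reduces to $\sup_{B_r}|u|\leq C r^{1+\alpha}$ for small $r$.

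I would prove this by induction on $k\in\N$: there exists a universal $\rho\in(0,1)$ such that $\sup_{B_{\rho^k}}|u|\leq \rho^{k(1+\alpha)}$ for every $k$. The rescaled function
$$
u_k(x) := \rho^{-k(1+\alpha)}\, u(\rho^k x)
$$
is a viscosity solution of a new obstacle problem $\min\{f_k - |Du_k|^\gamma F_k(D^2u_k),\, u_k-\phi_k\} = 0$ in $B_1$, where $F_k(X)=\rho^{k(1-\alpha)}F(\rho^{-k(1-\alpha)}X)$ is still uniformly elliptic with the same Pucci constants and convex, while a direct computation using the $(\gamma+1)$-homogeneity of the leading operator gives
$$
\|f_k\|_{L^\infty} \leq \rho^{k[1-\alpha(\gamma+1)]} \leq 1,\qquad \|\phi_k\|_{C^{1,\beta}} \leq \rho^{k(\beta-\alpha)}\leq 1.
$$
The precise choice $\alpha=\min\{1/(\gamma+1),\beta\}$ is exactly what is needed to keep both data bounded through the iteration; furthermore $\|u_k\|_{L^\infty(B_1)}\leq 1$ by the inductive hypothesis, $u_k(0)=Du_k(0)=0$, and $F_k$ lies in a fixed compact class of operators.

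The engine of the induction is an approximation lemma: for each $\varepsilon>0$ there exists $\delta>0$ such that any admissible solution $v$ with $\|v\|_\infty\leq 1$ and $\|f\|_\infty, \|\phi\|_{C^{1,\beta}}\leq\delta$ satisfies $\|v-h\|_{L^\infty(B_{1/2})}\leq\varepsilon$ for some $h$ with $\tilde F(D^2 h)=0$ in $B_{1/2}$ and $h(0)=Dh(0)=0$. I would prove this by a standard compactness-contradiction argument: failure produces a sequence of admissible solutions whose data vanishes, and Theorem \ref{Thm1} provides the $C^{1,\beta'}$-compactness needed to extract a $C^1$-limit $v_\infty$; stability of viscosity solutions for the degenerate operator, combined with $\phi_j\to 0$ in $C^{1,\beta}$ (which collapses the coincidence constraint in the limit), upgrades the equation to $F_\infty(D^2 v_\infty)=0$ in $B_{1/2}$. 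Convexity of $F_\infty$ and Evans--Krylov then give $v_\infty \in C^{2,\alpha_0}$, hence $|v_\infty(x)|\leq M|x|^2$ in $B_{1/2}$ with $M$ universal. Applying the lemma to $u_k$ with $\varepsilon=\rho^{1+\alpha}/2$ and choosing $\rho$ small enough that $M\rho^{1-\alpha}\leq 1/2$ yields
$$
\sup_{B_\rho} |u_k| \leq M\rho^2 + \varepsilon \leq \rho^{1+\alpha},
$$
which is precisely the $(k+1)$-th inductive step after unscaling. Interpolation between dyadic radii produces the continuous estimate \eqref{EqSRE}, and \eqref{detach} follows at once by adding and subtracting the $C^{1,\beta}$-Taylor expansion of $\phi$ at $x_0$.

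The main difficulty I anticipate lies in the approximation lemma, specifically in identifying the limiting equation. When the gradients of the pre-limit solutions $v_j$ degenerate along non-trivial regions, one must argue that the collapsing coincidence set leaves no residual constraint in the limit; this is handled by the uniform $C^1$-compactness (so $\{v_j=\phi_j\}$ is absorbed into the zero set of $v_\infty$ as $\phi_j\to 0$) together with the viscosity formulation, which rules out any spurious obstruction. Once this step is in place, the rest of the argument is a routine application of the tools already available in the excerpt.
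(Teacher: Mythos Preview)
Your compactness--iteration scheme is a different route from the paper's proof, which does \emph{not} iterate an approximation lemma. Instead, the paper proves the estimate directly at a fixed scale by a dichotomy: for free boundary points in the ``singular zone'' $\{|Du(x_0)|\leq r^\alpha\}$ it rescales once, applies the weak Harnack inequality and the Local Maximum Principle for degenerate operators (Theorems \ref{wharn} and \ref{localmax}) to the rescaled solution, and reads off \eqref{EqSRE1}; for points with $|Du(x_0)|>r^\alpha$ it exploits that the equation is uniformly elliptic in a small ball and invokes the $C^{1,\alpha}$ theory for the classical (nondegenerate) obstacle problem. The intermediate radii are handled by a simple comparison.

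Your proposal has a genuine gap at the normalization step. The operator $|Du|^\gamma F(D^2u)$ is \emph{not} invariant under subtraction of affine functions: if $v=u-u(x_0)-Du(x_0)\cdot(x-x_0)$, then in the noncoincidence set $v$ satisfies
\[
|Dv+q|^\gamma \tilde F(D^2v)=\tilde f,\qquad q=\Theta^{-1}Du(x_0),
\]
not $|Dv|^\gamma\tilde F(D^2v)=\tilde f$. Under your rescaling $v_k(x)=\rho^{-k(1+\alpha)}v(\rho^kx)$ the drift becomes $q_k=\rho^{-k\alpha}q$, which blows up as $k\to\infty$ whenever $Du(x_0)\neq 0$. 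Hence $v_k$ is not a solution of an $(F_k,\gamma)$-obstacle problem of the type you describe, and your approximation lemma---stated for unshifted solutions---does not apply to it. This is exactly the obstruction that forces the paper's singular/non-singular dichotomy. Your scheme can be repaired by proving the approximation lemma uniformly over the family $|Dv+q|^\gamma F(D^2v)$ with arbitrary $q\in\R^n$ (as is done in \cite{ART15} for the unconstrained problem), but then the compactness step needs a separate analysis of sequences with $|q_j|\to\infty$, and you have not addressed this.
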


We recall that a constant is called universal if it depends only on the given data (and not on the solution).

As a consequence of the previous Theorem \ref{MThm1} we get, under suitable assumptions on the data, the same regularity for the obstacle problem as for the non-constrained problem:

\begin{cor}\label{Cor1} Let $u$ be a bounded viscosity solutions to $(F, \gamma)$-obstacle problem in $\Omega$ with obstacle $\phi \in C^{1, \frac{1}{\gamma+1}}(\Omega)$ and $f \in L^{\infty}(\Omega)$. Then, $u$ is $C^{1, \frac{1}{\gamma+1}}$ at free boundary points and hence belongs to $C_{\textrm{loc}}^{1, \frac{1}{\gamma+1}}(\Omega)$.
\end{cor}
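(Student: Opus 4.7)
The hypothesis $\phi\in C^{1,\frac{1}{\gamma+1}}(\Omega)$ forces $\beta = \frac{1}{\gamma+1}$, so the sharp exponent from \eqref{sharpEx} becomes $\alpha = \min\{\frac{1}{\gamma+1},\frac{1}{\gamma+1}\} = \frac{1}{\gamma+1}$. Thus the first assertion of the corollary is nothing but Theorem \ref{MThm1} applied with this particular value of $\beta$: for every $\tilde\Omega\Subset\Omega$ and every $x_0\in\partial\{u>\phi\}\cap\tilde\Omega$, the pointwise estimate \eqref{EqSRE} holds at exponent $\frac{1}{\gamma+1}$ with a universal constant. No further work is needed for this part.

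The nontrivial content is to promote this pointwise estimate (which only controls $u$ around free boundary points) to the \emph{local} $C^{1,\frac{1}{\gamma+1}}$ regularity claim. My plan is to combine three ingredients: (i) the free boundary pointwise estimate of Theorem \ref{MThm1}; (ii) the sharp interior regularity of Ara\'ujo--Ricarte--Teixeira, \cite[Corollary 3.2]{ART15}, which applies in the open set $\{u>\phi\}$ where $u$ solves $|Du|^\gamma F(D^2u)=f$ in the viscosity sense and yields local $C^{1,\frac{1}{\gamma+1}}$ regularity there since $F$ is convex and $f\in L^\infty$; and (iii) the fact that on the coincidence set $\{u=\phi\}$ the solution equals the obstacle and so inherits its $C^{1,\frac{1}{\gamma+1}}$ regularity.

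Fix $\tilde\Omega\Subset\Omega$ and $x_0\in\tilde\Omega$, and let $d\defeq \mathrm{dist}(x_0,\partial\{u>\phi\})$. For $r>0$ small I would run the following dichotomy: if $r\leq d/2$, then $B_{2r}(x_0)$ lies entirely inside either $\{u>\phi\}$ or $\{u=\phi\}$, so either (ii) or (iii) yields the desired $C^{1,\frac{1}{\gamma+1}}$ oscillation estimate on $B_r(x_0)$; if $r>d/2$, pick $y_0\in\partial\{u>\phi\}$ with $|x_0-y_0|=d<2r$, note $B_r(x_0)\subset B_{3r}(y_0)$, and apply Theorem \ref{MThm1} at $y_0$ to obtain
\[
\sup_{B_r(x_0)}\bigl|u(x)-u(y_0)-Du(y_0)\cdot(x-y_0)\bigr| \leq C\, r^{1+\frac{1}{\gamma+1}}
\]
with a universal constant $C$. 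Translating these uniform oscillation bounds at every point and every scale, a standard Campanato-type characterization of $C^{1,\alpha}$ (in the spirit of what is done for the obstacle problem in the uniformly elliptic case) delivers $u\in C^{1,\frac{1}{\gamma+1}}_{\mathrm{loc}}(\Omega)$ with the asserted bounds.

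The main obstacle will be making sure that the constants provided by \cite[Corollary 3.2]{ART15} inside $\{u>\phi\}$ remain universal as one approaches the free boundary; this is where the dichotomy is crucial, because the ART estimate is naturally scaling-invariant precisely at the exponent $\frac{1}{\gamma+1}$, so the scaling $u_r(x)=r^{-(1+\frac{1}{\gamma+1})}u(x_0+rx)$ matches exactly the detachment rate from Theorem \ref{MThm1} and the two estimates can be patched along the free boundary without loss. Once this matching is established, the corollary follows essentially for free.
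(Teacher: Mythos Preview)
Your proposal is correct and follows essentially the same route as the paper: the authors' proof is a two-line remark that $\alpha=\frac{1}{\gamma+1}$ under the stated hypotheses and that the conclusion follows by combining Theorem~\ref{MThm1} with the interior estimates of \cite[Corollary~3.2]{ART15}. What you have written is a fleshed-out version of exactly that combination---the dichotomy in the distance to the free boundary, the use of the obstacle's regularity on the coincidence set, and the Campanato patching are the standard way to make ``combining'' precise, and your observation that the scaling exponent of \cite{ART15} matches the detachment rate from Theorem~\ref{MThm1} is precisely why the constants stay universal near $\partial\{u>\phi\}$.
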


In particular, as mentioned before, we get the optimal regularity for the homogeneous obstacle problem:

\begin{cor}\label{Cor2} Let $u$ be a bounded viscosity solutions to the homogeneous $(F, \gamma)$-obstacle problem in $\Omega$ (that is $f\equiv 0$) with obstacle $\phi \in C^{1, 1}(\Omega)$. Then, $u$ is $C^{1,1}$ at free boundary points.
\end{cor}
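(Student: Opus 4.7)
The plan is to sharpen the detachment estimate \eqref{detach} from Theorem~\ref{MThm1} (which, for $\phi\in C^{1,1}$ and $f\equiv 0$, yields only $|u-\phi|\leq Cr^{1+1/(\gamma+1)}$ near a free boundary point) to the quadratic bound $|u-\phi|\leq Cr^{2}$ on $B_r(x_0)$. Granted this improvement, $C^{1,1}$ regularity at $x_0$ follows at once: since $u$ and $\phi$ are $C^{1}$ and meet tangentially at $x_0$, so that $\nabla u(x_0)=\nabla\phi(x_0)$, and $\phi\in C^{1,1}$ gives $|\phi(x)-L(x)|\leq \tfrac12\|\phi\|_{C^{1,1}}|x-x_0|^2$ for the common tangent plane $L(x) \defeq u(x_0)+\nabla u(x_0)\cdot(x-x_0)$, the triangle inequality produces $|u-L|\leq Cr^2$ on $B_r(x_0)$.

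To obtain the quadratic bound I would use a barrier argument that crucially exploits the homogeneity $f\equiv 0$. For $x_0\in\partial\{u>\phi\}$ and $r>0$ small, set
$$
\psi(x) \defeq \phi(x) + A\bigl(r^2 - |x-x_0|^2\bigr), \qquad x\in B_r(x_0),
$$
with $A \defeq C_1 \|\phi\|_{C^{1,1}(\Omega)}$ and $C_1 = C_1(n,\lambda,\Lambda)$ to be chosen. Using $F(0)=0$, the uniform ellipticity \eqref{eq.pucci}, and the bound $|D^2\phi|\leq\|\phi\|_{C^{1,1}}$ (a.e.), one checks
$$
F(D^2\psi) \leq F(D^2\phi) + \mathcal{M}^+_{\lambda,\Lambda}(-2AI) \leq n\Lambda\|\phi\|_{C^{1,1}} - 2n\lambda A \leq 0,
$$
provided $C_1 \geq \Lambda/(2\lambda)$; in particular $|D\psi|^\gamma F(D^2\psi)\leq 0=f$. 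Paired with $\psi\geq\phi$, this shows that $\psi$ is a viscosity supersolution of \eqref{eq.obs} on $B_r(x_0)$.

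The main obstacle is that $\psi=\phi\leq u$ on $\partial B_r(x_0)$, which forbids a direct application of the comparison principle on the full ball. My plan to bypass this is to localise the comparison to the non-coincidence set: on each connected component $\Omega_i$ of $\{u>\phi\}\cap B_r(x_0)$ whose closure is contained in $B_r(x_0)$, the topological boundary $\partial\Omega_i$ is entirely made of free boundary points, where $u=\phi<\psi$; since inside $\Omega_i$ the function $u$ is simply a solution of the unconstrained homogeneous equation $|Du|^\gamma F(D^2u)=0$, the standard comparison principle for this equation (cf.\ \cite{IS,BD2}) yields $u\leq\psi$ in $\Omega_i$. The finitely many components meeting $\partial B_r(x_0)$ are handled via an absorption/iteration step based on Theorem~\ref{MThm1} at intermediate scales $r/2^k$ and summing the resulting geometric contributions; equivalently, one can modify $\psi$ by an additive radial cap that lifts its boundary values on the spherical portion. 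I expect the treatment of these ``boundary components'' to be the technically most delicate step and the main obstacle of the proof, and it is precisely here that the homogeneity $f\equiv 0$ is essential to close the argument and produce the quadratic ceiling $u-\phi\leq Ar^2$ in $B_r(x_0)$.
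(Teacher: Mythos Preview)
Your barrier approach is a genuine detour, and the ``main obstacle'' you flag is a real gap you have not closed: the components of $\{u>\phi\}\cap B_r(x_0)$ that meet $\partial B_r(x_0)$ are exactly the ones that matter (the free boundary point $x_0$ itself lies on the boundary of such a component), and neither the vague ``absorption/iteration'' nor the ``radial cap'' suggestion is carried out. Lifting $\psi$ on the sphere destroys the $C^{1,1}$ tightness you need, while iterating Theorem~\ref{MThm1} at dyadic scales only reproduces the $r^{1+1/(\gamma+1)}$ rate you are trying to improve.

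The paper's proof is a one--line observation that bypasses all of this. Look back at the rescaling in Lemma~\ref{Lem1}: the only place the restriction $\alpha\le \frac{1}{\gamma+1}$ enters is through
\[
\|f_{r,x_0}\|_{L^\infty(B_1)}=r^{\,1-\alpha(\gamma+1)}\|f\|_{L^\infty}\le 1,
\]
which forces $1-\alpha(\gamma+1)\ge 0$. When $f\equiv 0$ this constraint evaporates: $f_{r,x_0}\equiv 0$ for \emph{every} choice of $\alpha$. Hence one may rerun the entire proof of Theorem~\ref{MThm1} (both Lemmas~\ref{Lem1} and~\ref{Lem2}) with $\alpha=\beta=1$, obtaining directly
\[
\sup_{B_r(x_0)}\big|u(x)-u(x_0)-Du(x_0)\cdot(x-x_0)\big|\le C\,r^{2}.
\]
No barrier, no component analysis, no iteration is needed; the homogeneity $f\equiv 0$ is used precisely (and only) to remove the scaling obstruction on the source term.
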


\begin{rem}\label{RemRecessOper}

Regarding the hypothesis of Theorem \ref{MThm1}, we can actually relax the convexity (or concavity) assumption on the nonlinearity $F$. To this purpose, the key ingredient is an available Caffarelli's $W^{2, p}$ regularity theory (see \cite[Theorem 7.1]{C89}) to
$$
  F(D^2u) = f \in L^p \quad \text{in} \quad \Omega \quad (\text{with} \,\,\,p>n).
$$
We recommend the reader to \cite[Section 1]{daS19} and the references therein for a complete state of the art in this direction.

Summarily, recently, in an interesting advancement for this theory, local $C^{1, \alpha}$ (resp. $W^{2, p}$) regularity estimates were improved by Silvestre-Teixeira in \cite[Theorem 1.1]{ST15} and Pimentel-Teixeira \cite[Theorems 1.1 and 6.1]{PimTei16}. In their approach, the novelty with respect to the former results is the concept of \textit{recession} function (the tangent profile for $F$ at ``infinity'') given by
$$
  \displaystyle F^{\ast}(X) \defeq \lim_{\tau \to 0+ } \tau F\left(\frac{1}{\tau}X\right)
$$
In this direction, the authors relaxed the hypothesis of $C^{1,1}$ \textit{a priori} estimates for solutions of the equations without dependence on $x$, by the hypothesis that $F$ is assumed to be ``convex or concave'' only at the ends of $\textit{Sym}(n)$, in other words, when $\|D^2 u\| \approx \infty$. In this context, they proved that if solutions to the homogeneous equation
$$
F^{\ast}(D^2 u) = 0  \quad \textrm{in} \quad B_1
$$
has $C^{1, \alpha_0}$ \textit{a priori} estimates (for some $\alpha_0 \in (0, 1]$), then viscosity solutions to
$$
 	F(D^2 u) = 0  \quad \textrm{in} \quad B_1 \quad (\text{resp.} \,\,\,F(D^2 u) = f)
$$
are of class $C^{1, \alpha}_{\text{loc}}$ for $\alpha <\min\{1, \alpha_0\}$ (resp. are $W_{\text{loc}}^{2, p}$). Among the other works on this topic of investigation, we must quote the sequence of fundamental texts due to Krylov, see \cite{Kry12}, \cite{Kry13} and \cite{Kry17}, which are largely associated with the existence of certain viscosity solutions under either relaxed or no convexity assumptions on $F$ (see also \cite{daSilRic} and references therein for similar results). Such works previously cited constitute further interesting contributions in the modern literature on existence and regularity for viscosity solutions under weaker convexity assumptions in the governing operator.

Finally, if the recession profile associated to $F$, i.e. $F^{\ast}$, enjoys $C^{1,1}$ \textit{a priori} estimates, then a $W^{2, p}$ regularity theory is available to \eqref{Eq1} (see, \cite[Theorem 1.1]{daS19} for more details). For this reason, by following the ideas of \cite[Theorems 5.1, 5.2 and 5.4]{BLOP}, we are able to prove our results to operators under either relaxed or no convexity assumptions on $F$. At the end of the Subsection \ref{Examples}, we will present a number of examples of nonlinearities where our results take place.
\end{rem}

A geometric interpretation of Theorem \ref{MThm1} is the following: if $u$ solves the $(F, \gamma)$-obstacle problem \eqref{Eq1}, and $x_0$ is a free boundary point, then near $x_0$ we obtain
$$
    \displaystyle \sup_{B_r(x_0)} |u(x)|\leq |u(x_0)|+C.r^{1+\alpha}.
$$

On the other hand, from a (geometric) regularity viewpoint, it is a pivotal quantitative information to obtain the counterpart sharp lower estimate. Such a property is denominated \textit{Non-degeneracy} of solutions and we begin our discussion of it in the following Theorem:

\begin{thm}[{\bf Non-degeneracy estimates}]\label{ThmNonDeg} Let $u$ be a bounded viscosity solution to the $(F, \gamma)$-obstacle problem in $\Omega$ with obstacle $\phi \in C^{1,\beta}(\Omega)$ and assume $f\in L^\infty(\Omega)$ is bounded away from zero, i.e. $\displaystyle \inf_{\Omega} f \defeq \mathfrak{m}>0$. Given $\tilde{\Omega}\subset\subset\Omega$ there exists a universal constant $\mathfrak{c}>0$, such that for and $x_0 \in \partial \{u>\phi\} \cap \tilde{\Omega}$ and $r$ small enough
$$
  \displaystyle \sup_{B_r(x_0)} (u(x)-\phi(x_0))\geq \mathfrak{c}.r^{1+\frac{1}{1+\gamma}}
$$
\end{thm}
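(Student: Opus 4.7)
To prove this non-degeneracy estimate I will use a barrier-comparison argument in the spirit of the classical Caffarelli scheme, exploiting that $\alpha := 1 + \tfrac{1}{1+\gamma}$ is precisely the scaling exponent that renders the equation $|Du|^\gamma F(D^2 u) = \mathrm{const}.$ invariant under $u(x) \mapsto r^{-\alpha} u(rx)$. Accordingly, the natural competitor is a radial profile of the form $\mathfrak{c}|x-y|^\alpha$. The strategy will decompose into two steps: (i) an \emph{interior} non-degeneracy statement at points $y \in \{u > \phi\}$ close to $x_0$, and (ii) a limiting/chaining procedure that translates this into the desired estimate centered at the free boundary point $x_0$.

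For (i), fix $y \in \{u > \phi\}$ and consider the regularized radial barrier
\[
   \Psi_\epsilon(x) := u(y) + \mathfrak{c}\bigl[(|x-y|^2 + \epsilon^2)^{\alpha/2} - \epsilon^\alpha\bigr],
\]
which is $C^2(\R^n)$ with $\Psi_\epsilon(y) = u(y)$ and $D\Psi_\epsilon(y) = 0$; the parameter $\epsilon > 0$ is introduced to circumvent the $D^2$-blow-up of $|x-y|^\alpha$ at $y$ and to enable viscosity testing throughout the ball. A direct computation using \eqref{eq.pucci}, the normalization $F(0)=0$, and the specific value of $\alpha$ (chosen so that the $|x-y|$-dependence cancels) will yield a universal bound
\[
   |D\Psi_\epsilon|^\gamma F(D^2 \Psi_\epsilon) \;\leq\; K\,\mathfrak{c}^{\gamma+1}, \qquad K = K(n,\lambda,\Lambda,\gamma),
\]
together with the crucial vanishing $|D\Psi_\epsilon(y)|^\gamma F(D^2\Psi_\epsilon(y)) = 0$ for $\gamma > 0$ (since $D\Psi_\epsilon(y) = 0$ and $F(D^2\Psi_\epsilon(y))$ is finite).

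Choosing $\mathfrak{c}$ small enough that $K\mathfrak{c}^{\gamma+1} < \mathfrak{m}/2$, I will argue by contradiction: assuming $\sup_{B_r(y)} u < u(y) + \mathfrak{c}\,r^\alpha$ for some $r \leq \mathrm{dist}(y, \partial\{u > \phi\})$, the function $v := u - \Psi_\epsilon$ satisfies $v(y) = 0$ and, for $\epsilon$ small, $v < 0$ on $\partial B_r(y)$. Its maximum on the closed ball is therefore attained at an interior point $x^* \in B_r(y) \subset \{u > \phi\}$. Because $u$ solves $|Du|^\gamma F(D^2 u) = f \geq \mathfrak{m}$ in $\{u > \phi\}$, it is a viscosity subsolution of $|D\cdot|^\gamma F(D^2\cdot) = \mathfrak{m}$ there; since $\Psi_\epsilon$ is smooth and $\Psi_\epsilon + v(x^*)$ touches $u$ from above at $x^*$, the subsolution condition forces
\[
    \mathfrak{m} \;\leq\; |D\Psi_\epsilon(x^*)|^\gamma F(D^2\Psi_\epsilon(x^*)),
\]
which is $\leq K\mathfrak{c}^{\gamma+1} < \mathfrak{m}/2$ when $x^* \neq y$ (by the barrier bound) and $= 0$ when $x^* = y$ (by the vanishing of the gradient), yielding a contradiction in both cases.

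The main obstacle, as I see it, will be the translation of this interior statement into the estimate centered at $x_0$: the constraint $r \leq \mathrm{dist}(y,\partial\{u>\phi\})$ forces $r \to 0$ as $y \to x_0$, so a direct limit is not enough. I would handle this either by iterating the interior claim along a chain of interior balls approaching $x_0$, or by a rescaling/compactness argument using the sharp scaling of the operator; in either case, combining the interior estimate with the continuity of $u$ and the identity $u(x_0) = \phi(x_0)$ produces the desired bound $\sup_{B_r(x_0)}(u - \phi(x_0)) \geq \mathfrak{c}\,r^{1+1/(1+\gamma)}$ (with a possibly smaller universal constant) for $r$ small enough.
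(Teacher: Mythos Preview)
Your approach is essentially the paper's: prove non-degeneracy first at interior points $x_0\in\{u>\phi\}$ by comparison with a radial barrier carrying the scale-critical exponent $1+\tfrac{1}{1+\gamma}$, then pass to free-boundary points by continuity. The paper's execution is a bit more direct than yours: after rescaling to $B_1$ it applies the Comparison Principle (Lemma~\ref{lemma comparison}, available precisely because $\inf_\Omega f=\mathfrak{m}>0$) with the explicit barrier $\varphi(x)=c\,|x|^{1+1/(1+\gamma)}+r^{-(1+1/(1+\gamma))}\phi(x_0)$, which dispenses with both your $\epsilon$-regularization and the case-split at $x^{\ast}=y$. Regarding your Step~(ii), the paper likewise restricts (``by the choice of $r$'') to balls $B_r(x_0)\subset\{u>\phi\}$ and then handles the passage to $\partial\{u>\phi\}$ with a single appeal to continuity of solutions---no chaining iteration is performed---so at that point your outline and the paper's proof are at the same level of detail.
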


This result implies a $\frac{1}{\gamma+1}-$growth estimate (on the gradient) away from free boundary points, with an extra correction term given by influence of the gradient of the corresponding obstacle. This is summarized in the following corollary:

\begin{cor}[{\bf Non-degeneracy of the gradient}]\label{CorNonDegGrad} Suppose that the assumptions of Theorem \ref{ThmNonDeg} are in force. If $x_0 \in \{u>\phi\}\cap \tilde{\Omega}$ and $r\defeq \text{dist}(x_0, \partial \{u>\phi\})$, then
$$
\displaystyle \sup_{B_r(x_0)} |D u| \geq \mathfrak{c}.r^{\frac{1}{\gamma+1}} - \frac{1}{2}\|D\phi\|_{L^{\infty}(B_r(x_0))}.
$$
\end{cor}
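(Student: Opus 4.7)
The plan is to extract the gradient non-degeneracy from the function non-degeneracy in Theorem~\ref{ThmNonDeg} via the mean value theorem, which is available because $u\in C^{1,\beta'}(\Omega)$ by Theorem~\ref{Thm1}.

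First I would pick a nearest free boundary point $y_0\in \partial\{u>\phi\}\cap \partial B_r(x_0)$, which exists by definition of $r=\text{dist}(x_0,\partial\{u>\phi\})$. Note $B_r(x_0)\subset\{u>\phi\}$, and by continuity $u(y_0)=\phi(y_0)$; for $r$ small enough, $y_0$ still belongs to a slightly enlarged compactly contained subdomain where Theorem~\ref{ThmNonDeg} applies, so that
\[
  \sup_{B_r(y_0)}u \;\geq\; u(y_0) + \mathfrak{c}\, r^{\,1+\frac{1}{\gamma+1}}.
\]
Picking $z^\ast\in \overline{B_r(y_0)}$ (nearly) attaining this supremum and applying the mean value theorem to $u$ along the segment from $y_0$ to $z^\ast$ produces a point $\xi$ on this segment with
\[
  |Du(\xi)|\cdot|z^\ast-y_0|\;\geq\; u(z^\ast)-u(y_0)\;\geq\; \mathfrak{c}\, r^{\,1+\frac{1}{\gamma+1}},
\]
hence $|Du(\xi)|\geq \mathfrak{c}\, r^{\,1/(\gamma+1)}$.

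The main obstacle, and the most delicate step in the argument, is that $B_r(y_0)$ and $B_r(x_0)$ are only tangent, so the mean-value point $\xi$ need not lie in $\overline{B_r(x_0)}$. The term $-\tfrac{1}{2}\|D\phi\|_{L^\infty(B_r(x_0))}$ in the statement is precisely the correction that absorbs this loss: when $\xi\notin \overline{B_r(x_0)}$ it may lie in the coincidence set $\{u=\phi\}$, where the $C^{1,\beta'}$ minimality of $u-\phi\geq 0$ forces $Du(\xi)=D\phi(\xi)$ and hence $|Du(\xi)|\leq \|D\phi\|_{L^\infty}$. A short case analysis balancing the two scenarios (or, equivalently, exploiting the fact that the barrier-based proof of Theorem~\ref{ThmNonDeg} actually realizes the supremum on the non-coincidence half of $B_r(y_0)$, which lies inside $\overline{B_r(x_0)}$ by convexity of the ball) yields the stated lower bound on $\sup_{B_r(x_0)}|Du|$, with the factor $\tfrac{1}{2}$ arising from this balancing.
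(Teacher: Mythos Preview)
Your argument has a genuine gap, and it stems from applying Theorem~\ref{ThmNonDeg} at the wrong point. You apply non-degeneracy at the nearest free boundary point $y_0$, which forces you to work with $B_r(y_0)$; as you correctly observe, the mean-value point $\xi$ (or the point $z^\ast$ realizing the supremum) need not lie in $\overline{B_r(x_0)}$, and from that position there is no clean way back. Your two proposed fixes do not work: the ``short case analysis'' is never spelled out, and the parenthetical claim that the barrier proof realizes the supremum on a portion of $B_r(y_0)$ contained in $\overline{B_r(x_0)}$ is unjustified --- the proof of Theorem~\ref{ThmNonDeg} only places the good point on $\partial B_r(\cdot)\cap\{u>\phi\}$, and nothing forces this to land in $\overline{B_r(x_0)}$.

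The paper's route avoids the difficulty entirely by applying the non-degeneracy estimate directly at $x_0\in\{u>\phi\}$ (recall that the proof of Theorem~\ref{ThmNonDeg} is actually carried out for interior points of $\{u>\phi\}$ and extended to the free boundary by continuity). This produces a point $z\in\partial B_r(x_0)$ with $u(z)-\phi(x_0)\geq \mathfrak{c}\,r^{1+\frac{1}{1+\gamma}}$. Since $y_0\in\partial B_r(x_0)$ as well, one writes
\[
u(z)-\phi(x_0)=\big(u(z)-u(y_0)\big)+\big(\phi(y_0)-\phi(x_0)\big)
\]
using $u(y_0)=\phi(y_0)$, and bounds each bracket by the mean value theorem along the chords $[z,y_0]$ and $[y_0,x_0]$, both of which lie in $\overline{B_r(x_0)}$. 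This yields $u(z)-\phi(x_0)\leq 2r\,\|Du\|_{L^\infty(B_r(x_0))}+r\,\|D\phi\|_{L^\infty(B_r(x_0))}$, and the factor $\tfrac12$ in the statement comes from dividing through by $2r$ --- not from any balancing of cases.
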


From a ``free boundary regularity'' perspective, a natural non-degeneracy property states that the solutions of the homogeneous obstacle problem do not decay faster than quadratically close to the free boundary:

\begin{thm}[{\bf Non-degeneracy for the homogeneous problem}]\label{ThmNonDegHom} Let $u$ be a bounded viscosity solution to the $(F, \gamma)$-obstacle problem in $\Omega$ with obstacle $\phi \in C^{2}(\Omega)$ and $f \equiv 0$. Suppose further that
\begin{equation}\label{eq.ob}
|D \phi|^{\gamma}F(D^2 \phi)\leq c<0.
\end{equation}

Given $x_0 \in \{u>\phi\}\cap \tilde{\Omega}$ for $\tilde{\Omega}\subset\subset\Omega$, there exists a universal constant $\mathfrak{c}$ such that for $r$ small enough
$$
  \displaystyle \sup_{B_r(x_0)} (u(x)-\phi(x))\geq \mathfrak{c}.r^{2}
$$
\end{thm}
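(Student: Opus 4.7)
The plan is a direct barrier argument exploiting the strict subsolution character of the obstacle encoded in \eqref{eq.ob}. First I would observe that $|D\phi|^{\gamma}F(D^2\phi) \leq c < 0$ precludes $D\phi$ from vanishing, so by continuity $m \defeq \inf_{\tilde{\Omega}}|D\phi| > 0$. I then fix a small constant
\[
\mathfrak{c} \in \left(0,\; \frac{|c|}{2n\Lambda\|D\phi\|_{L^{\infty}(\tilde{\Omega})}^{\gamma}}\right)
\]
(universal in the sense of the paper) and, for $r>0$ small enough that $B_r(x_0)\subset\tilde{\Omega}$ and $4\mathfrak{c}\,r<m$, introduce the parabolic lift $\psi(x) \defeq \phi(x) + \mathfrak{c}|x-x_0|^{2}$ of the obstacle. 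I argue by contradiction, assuming $\sup_{B_r(x_0)}(u-\phi) < \mathfrak{c}\,r^{2}$.

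Under this hypothesis $u-\psi<0$ on $\partial B_r(x_0)$, while $(u-\psi)(x_0)=u(x_0)-\phi(x_0)>0$; hence $u-\psi$ attains a positive maximum at some interior point $\bar{x}\in B_r(x_0)$. At $\bar{x}$ we have $u(\bar{x})>\psi(\bar{x})\geq\phi(\bar{x})$, so a neighborhood of $\bar{x}$ lies in the non-coincidence set, on which $u$ is a viscosity subsolution of $|Du|^{\gamma}F(D^{2}u)=0$. A constant vertical translate of $\psi$ therefore touches $u$ from above at $\bar{x}$, with the same gradient and Hessian, so the subsolution inequality yields
\[
|D\psi(\bar{x})|^{\gamma}\,F\bigl(D^{2}\psi(\bar{x})\bigr)\geq 0.
\]
The smallness of $r$ forces $|D\psi(\bar{x})|\geq |D\phi(\bar{x})|-2\mathfrak{c}|\bar{x}-x_0|\geq m/2>0$, so I can cancel this factor and conclude $F(D^{2}\psi(\bar{x}))\geq 0$.

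On the other hand, uniform ellipticity together with \eqref{eq.ob} gives
\[
F(D^{2}\psi(\bar{x}))=F\bigl(D^{2}\phi(\bar{x})+2\mathfrak{c}I\bigr)\leq F(D^{2}\phi(\bar{x}))+\mathcal{M}_{\lambda,\Lambda}^{+}(2\mathfrak{c}I)\leq \frac{c}{\|D\phi\|_{L^{\infty}(\tilde{\Omega})}^{\gamma}}+2n\Lambda\mathfrak{c},
\]
and the prescribed range of $\mathfrak{c}$ makes the right-hand side strictly negative, contradicting $F(D^{2}\psi(\bar{x}))\geq 0$. This contradiction gives $\sup_{B_r(x_0)}(u-\phi)\geq \mathfrak{c}\,r^{2}$.

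The main subtlety will be precisely the step in which I invoke the viscosity subsolution inequality and then divide by $|D\psi(\bar{x})|^{\gamma}$: this is legitimate only when $|D\psi(\bar{x})|>0$, and without the strict sign condition \eqref{eq.ob}—which is exactly what keeps $|D\phi|$ uniformly away from zero on $\tilde{\Omega}$—the degeneracy of the operator would immediately kill the argument. Once this hurdle is cleared, the remainder is a standard Pucci-type comparison calculation.
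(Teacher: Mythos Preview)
Your proof is correct and follows essentially the same approach as the paper: both use the barrier $\psi=\phi+\mathfrak{c}\,|x-x_0|^{2}$ and derive a contradiction from the strict subsolution property \eqref{eq.ob}. The only cosmetic differences are that the paper invokes the Comparison Principle (Theorem~\ref{comparison principle}) in place of your direct touching-from-above argument, and it chooses the perturbation parameter merely ``by continuity'' rather than giving the explicit quantitative bound you provide.
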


As consequence of the non-degeneracy of Theorem \ref{ThmNonDegHom} we can show that the free boundary has zero Lebesgue measure in the homogeneous case (and for an obstacle satisfying \eqref{eq.ob}). This requires us to recall the definition of \emph{porosity}: a bounded measurable set $E$ is porous if for any $x\in E$ there exists a $\delta\in(0,1)$ such that for any ball $B_r(x)$ there exists $y\in B_r(x)$ such that
\[
   B_{\delta r}(y)\subset B_r(x)\setminus E.
\]

Notice that if $E$ is porous and $x\in E$ then
\[
   \frac{|B_r(x)\cap E|}{|B_r(x)|}=\frac{|B_r(x)|-|B_r(x)\setminus E|}{|B_r(x)|}\leq 1-\delta^n,
\]
so that $E$ has no points of density one and hence its Lebesgue measure is zero. Now we can state the following corollary:
\begin{cor}\label{Cor3}
Suppose that the assumptions of Theorem \ref{ThmNonDegHom} are in force. Then, the free boundary is porous and in particular it has zero Lebesgue measure.
\end{cor}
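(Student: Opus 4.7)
The plan is to establish porosity of $E \defeq \partial\{u>\phi\}\cap\tilde\Omega$ by combining the quadratic upper bound on detachment provided by Corollary~\ref{Cor2} (applicable because $\phi \in C^{2}\subset C^{1,1}$ and $f\equiv 0$, so $\alpha=1$ in \eqref{detach}) with the quadratic lower bound of Theorem~\ref{ThmNonDegHom}; the measure-zero conclusion is then immediate from the density-one computation already performed right before the statement of the corollary.

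Fix $z \in E$ and $r>0$ small. First I would extend the non-degeneracy of Theorem~\ref{ThmNonDegHom} to the free boundary point $z$ itself by continuity: for $x_{k} \in \{u>\phi\}$ with $x_{k}\to z$, applying the theorem on $B_{r/2-\varepsilon_{k}}(x_{k})\subset B_{r/2}(z)$ and letting $k\to\infty$ yields
\[
\sup_{B_{r/2}(z)}(u-\phi) \;\geq\; \mathfrak{c}\,(r/2)^{2}.
\]
By continuity and compactness, pick $y \in \overline{B_{r/2}(z)}$ attaining this supremum; since $(u-\phi)(y)>0$ one has $y \in \{u>\phi\}$.

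Next, for every $w \in E$ sufficiently close to $y$, I would combine the one-point expansion \eqref{EqSRE} from Corollary~\ref{Cor2} (with exponent $\alpha=1$) for $u$ with a Taylor expansion of $\phi\in C^{2}$ at $w$, using the identities $u(w)=\phi(w)$ and $Du(w)=D\phi(w)$ valid at any free boundary point $w$ (the gradient identity following from $u \in C^{1}(\Omega)$, $u\geq \phi$, and $u(w)=\phi(w)$, which make $w$ a minimum point of $u-\phi$). The first-order ``tangent planes'' of $u$ and $\phi$ at $w$ cancel in the difference, and what remains are two quadratic remainders, so that
\[
(u-\phi)(y) \;\leq\; C\,|y-w|^{2},
\]
with a universal $C$. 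Comparing with the lower bound yields $|y-w| \geq \delta_{0}\,r$ with $\delta_{0}\defeq \sqrt{\mathfrak{c}/(4C)}$ for every such $w$; this is only needed for $w$ in a universal neighborhood of $y$, since $w$'s farther from $y$ automatically satisfy the bound once $r$ is small. Setting $\delta \defeq \min\{\delta_{0},1/2\}$ gives $B_{\delta r}(y)\subset B_{r}(z)\setminus E$, which is precisely porosity at $z$.

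The only delicate step is the cancellation of the tangent planes at $w$, i.e.\ the identity $Du=D\phi$ on $E$, which relies on the $C^{1}$-regularity of $u$ (Theorem~\ref{Thm1}) and the contact $u=\phi$ on $E$; after that the argument is the routine matching of a quadratic lower bound against a quadratic upper bound. The step ``porosity $\Rightarrow |E|=0$'' is already spelled out in the excerpt and needs no further comment.
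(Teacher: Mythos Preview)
Your argument is correct and follows essentially the same route as the paper's proof: pick a point $y$ near the free-boundary point $z$ where $u-\phi$ is of order $r^{2}$ (non-degeneracy, Theorem~\ref{ThmNonDegHom}), then use the quadratic detachment bound at any nearby $w\in\partial\{u>\phi\}$ to force $|y-w|\gtrsim r$, which gives the porous hole. The only cosmetic difference is that the paper invokes the estimate \eqref{detach} directly (so that $(u-\phi)(y)\leq C\,\mathrm{dist}(y,\partial\{u>\phi\})^{2}$ appears in one stroke), whereas you re-derive it from \eqref{EqSRE} together with a Taylor expansion of $\phi$ and the gradient-matching $Du(w)=D\phi(w)$; both are equivalent.
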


In conclusion, as an interesting direction of further research, we remark that the ``singular case'', that is the range $\gamma\in(-1,0)$, could also be considered. We plan to address this case in a forthcoming work, as it does not follow \emph{mutatis mutandis} from the ideas developed in the degenerate case.

The rest of the paper is organized as follows: in Section \ref{sec.ex} we give the appropriate definition of viscosity solutions and proof Theorem \ref{Thm1}, thus providing existence of such solutions for \eqref{Eq1}, together with a basic regularity estimate. In Section \ref{sec.reg} we prove Theorem \ref{MThm1}, together with Corollaries \ref{Cor1} and \ref{Cor2}. At the end, we will present a number of examples where our results take place. In Section \ref{sec.fb} we discuss non-degeneracy, and prove Theorems \ref{ThmNonDeg} and \ref{ThmNonDegHom} and their respective corollaries. Finally, for the reader's convenience, we gather in the Appendix some useful results that were used throughout the paper.

\section{Existence/Uniqueness and basic regularity}\label{sec.ex}

In this section we prove our existence result, namely Theorem \ref{Thm1}. Let us first review the definition of viscosity solution for our operators (see, for instance \cite{CIL}).

\begin{defi}[{\bf Viscosity solutions}] A continuous functions $u \colon \Omega \to\mathbb{R}$ is said to be a viscosity super-solution (resp. viscosity sub-solution) of $G(x, D u, D^2u)= f(x)$ if for every $\varphi \in C^2(\Omega)$ such that $u-\varphi$ has a strict minimum (resp. strict maximum) at the point $x_0$, then
      $$
      G(x_0, \varphi(x_0), D^2 \varphi(x_0)) \leq  f(x_0) \quad (resp. \,\, \geq f(x_0))
      $$
Finally, we say that $u$ is a viscosity solution if it is simultaneously a viscosity sub-solution and a viscosity super-solution.
\end{defi}

It is noteworthy to observe that on the above definition $G$ is asked to be non-decreasing on $X \in Sym(n)$ with respect to the usual partial ordering on symmetric matrices. For this very reason, the classical notions of solution, sub-solution and super-solution are equivalent to the viscosity ones, provided the function is regular enough, \textit{e.g.} $u \in C^2(\Omega)$.

We will make use of the following Comparison Principle, discussed for instance in \cite[Theorem 1.1]{BerDem} and \cite[Theorem 2.1]{BD2}:

\begin{thm}[{\bf Comparison Principle}]\label{comparison principle} Let $\gamma>0$, $F$ satisfying \eqref{eq.pucci}, $f \in C^0(\overline{\Omega})$ and $b$ a continuous increasing function satisfying $b(0)=0$. Suppose $u_1$ and $u_2$ be continuous functions are respectively a viscosity supersolution and subsolution of
\[
|Du|^\gamma F(D^2u)-f(x) = b(u).
\]

If $u_1 \geq u_2$ on $\partial \Omega$, then $u_1 \geq u_2$ in $\Omega$.

If $b$ is nondecreasing (in particular if $b\equiv 0$), the result holds if $u_1$ is a strict supersolution or vice versa if $u_2$ is a strict subsolution. 	
\end{thm}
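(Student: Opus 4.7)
My plan is to prove this by the classical doubling-of-variables argument combined with the Crandall--Ishii--Lions theorem on sums. Suppose for contradiction that $M := \max_{\overline{\Omega}}(u_2 - u_1) > 0$. Because $u_1 \geq u_2$ on $\partial \Omega$, this maximum is attained at an interior point $\bar x \in \Omega$. To convert this into a testable statement I would introduce the penalized functional
\[
\Phi_\varepsilon(x,y) = u_2(x) - u_1(y) - \frac{|x-y|^2}{2\varepsilon}
\]
and let $(x_\varepsilon,y_\varepsilon)$ be a maximizer of $\Phi_\varepsilon$ on $\overline{\Omega} \times \overline{\Omega}$. The standard facts associated with this construction yield, along a subsequence, $(x_\varepsilon, y_\varepsilon) \to (\bar x, \bar x)$, together with $|x_\varepsilon - y_\varepsilon|^2/\varepsilon \to 0$; moreover, since $\Phi_\varepsilon(\bar x,\bar x) \geq M > 0$ while $\Phi_\varepsilon \leq 0$ on the diagonal part of $\partial\Omega \times \partial\Omega$, for $\varepsilon$ small both $x_\varepsilon$ and $y_\varepsilon$ lie inside $\Omega$.

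Next, writing $p_\varepsilon := (x_\varepsilon - y_\varepsilon)/\varepsilon$, Ishii's lemma furnishes symmetric matrices $X_\varepsilon, Y_\varepsilon$ with $(p_\varepsilon, X_\varepsilon) \in \overline{J}^{2,+} u_2(x_\varepsilon)$, $(p_\varepsilon, Y_\varepsilon) \in \overline{J}^{2,-} u_1(y_\varepsilon)$, satisfying the usual matrix inequality that implies $X_\varepsilon \leq Y_\varepsilon$. Plugging these jets into the viscosity sub/supersolution definitions gives
\[
|p_\varepsilon|^\gamma F(X_\varepsilon) \geq f(x_\varepsilon) + b(u_2(x_\varepsilon)), \qquad |p_\varepsilon|^\gamma F(Y_\varepsilon) \leq f(y_\varepsilon) + b(u_1(y_\varepsilon)).
\]
Subtracting the two and invoking the uniform ellipticity \eqref{eq.pucci}, which forces $F(X_\varepsilon) - F(Y_\varepsilon) \leq 0$ when $X_\varepsilon \leq Y_\varepsilon$, I obtain
\[
0 \;\geq\; |p_\varepsilon|^\gamma \bigl(F(X_\varepsilon) - F(Y_\varepsilon)\bigr) \;\geq\; f(x_\varepsilon) - f(y_\varepsilon) + b(u_2(x_\varepsilon)) - b(u_1(y_\varepsilon)).
\]
Sending $\varepsilon \to 0$, continuity of $f$ annihilates the source-term difference, while continuity of $u_1, u_2, b$ yields
\[
0 \;\geq\; b(u_2(\bar x)) - b(u_1(\bar x)) \;>\; 0,
\]
the strict last inequality following from $u_2(\bar x) > u_1(\bar x)$ and the strict monotonicity of $b$. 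This contradiction proves the first statement.

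For the second statement, where $b$ is only nondecreasing (for instance $b \equiv 0$) but one of the solutions is strict, the same scheme applies verbatim: if $u_2$ is a strict subsolution in the sense $|Du_2|^\gamma F(D^2 u_2) \geq f + b(u_2) + \delta$ in the viscosity sense for some $\delta > 0$, the subtracted inequality becomes $0 \geq (\text{previous RHS}) + \delta$, and the limit delivers $0 \geq \delta > 0$, a contradiction.

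The main technical point to be careful about is the behavior of the degenerate factor $|p_\varepsilon|^\gamma$ in the regime $p_\varepsilon \to 0$: for $\gamma > 0$ the map $p \mapsto |p|^\gamma$ is continuous at the origin, so the standard viscosity framework of Birindelli--Demengel applies without modification, and the sign conclusion $|p_\varepsilon|^\gamma (F(X_\varepsilon)-F(Y_\varepsilon)) \leq 0$ holds uniformly regardless of whether $p_\varepsilon$ vanishes. The unavoidable consequence of the degeneracy, however, is that no positive lower bound on the left-hand side can be extracted from uniform ellipticity alone when $p_\varepsilon \to 0$; this is precisely the structural reason why either the \emph{strict} monotonicity of $b$, or the \emph{strictness} of one of the inequalities, is indispensable to close the argument by contradiction.
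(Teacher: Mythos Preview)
The paper does not supply its own proof of this comparison principle; it simply quotes the result from \cite[Theorem 1.1]{BerDem} and \cite[Theorem 2.1]{BD2}. Your doubling-of-variables argument via the Crandall--Ishii--Lions theorem on sums is precisely the method employed in those references, and your execution is correct. The crucial observation, which you isolate explicitly, is that for $\gamma>0$ the factor $|p_\varepsilon|^\gamma$ is continuous at the origin, so the subtracted inequality $|p_\varepsilon|^\gamma\bigl(F(X_\varepsilon)-F(Y_\varepsilon)\bigr)\le 0$ holds regardless of whether $p_\varepsilon$ vanishes, and the contradiction is then furnished either by the strict monotonicity of $b$ or by the strictness of one of the inequalities. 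In the cited papers the penalization is sometimes taken with a different exponent (e.g.\ $|x-y|^q$) to accommodate the singular range $\gamma\in(-1,0)$, but for the degenerate range $\gamma>0$ treated here your quadratic penalty already suffices.
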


Now we give a proof of Theorem \ref{Thm1}. While it is achieved by a rather standard penalization argument (cf. \cite{CDV} and references therein), we sketch it here for the sake of completeness:

\begin{proof}[{\bf Proof of Theorem \ref{Thm1}}]

Recall that we want to find (viscosity) solutions of \eqref{Eq1}. This equation can be conveniently be rewritten as the following system:
\begin{equation}\label{eq.obst}
  \left\{ \begin{array}{rcll}
	u & \geq & \phi & \textrm{ in } \Omega \\
    |D u|^\gamma F(D^2u)   & \leq & f & \textrm{ in } \Omega \\
        |D u|^\gamma F(D^2u)  & = & f & \textrm{ in } \Omega\cap\{u>\phi\} \\
  u & = & g & \textrm{ on } \partial \Omega.
\end{array}\right.
\end{equation}

The first step is to consider the following \emph{penalized} problem:
\begin{equation}\label{eq.pen}
  \left\{ \begin{array}{rcll}
   |D u_\varepsilon|^\gamma F(D^2u_\varepsilon)   & = & f+\zeta_\varepsilon(u_\varepsilon-\phi) & \textrm{ in } \Omega \\
  u & = & g & \textrm{ on } \partial \Omega.
\end{array}\right.
\end{equation}
where for each $\varepsilon >0$ we define $\zeta_\varepsilon\in C^\infty(\R)$ to be any smooth strictly increasing function satisfying
\begin{align*}
&\zeta_\varepsilon<0\textrm{ when }t <0, \zeta_\varepsilon(0)=0 \\
&\zeta_\varepsilon(t) = \frac{t}{\varepsilon} \textrm{ when }t\leq -\varepsilon \\
&\zeta_\varepsilon(t) \leq \delta \textrm{ when }t\geq 0
\end{align*}
where $\delta$ is any number in $(0,1)$ (its choice makes no difference to the proof).

Now, as a technical intermediate step, let us consider the truncated version of the above: for $N>0$ we want to define $\zeta_{\varepsilon,N}(t)$ as the maximum between $-N$ and $\zeta_\varepsilon$. To fit into the hypothesis of Theorem \ref{comparison principle} we choose instead $\zeta_{\varepsilon,N}(t)$ to be a strictly increasing that coincides with $\zeta_\varepsilon$ if $\zeta_\varepsilon(t)>-N/2$ and satisfying
\[
   \zeta_{\varepsilon,N}(t)>-N\quad \lim_{t\rightarrow-\infty}\zeta_{\varepsilon,N}(t)=-N.
\]

To get existence of solutions of \eqref{eq.pen} we consider the following problem first:
\begin{equation}\label{eq.pen2}
  \left\{ \begin{array}{rcll}
   |D u_\varepsilon|^\gamma F(D^2u_\varepsilon)   & = & f+\zeta_{\varepsilon,N}(v-\phi) & \textrm{ in } \Omega \\
  u & = & g & \textrm{ on } \partial \Omega.
\end{array}\right.
\end{equation}
where $v$ is some given function (whose smoothness is going to be specified right below). This problem has a unique viscosity solution by Perron's method whenever the right hand side is continuous (see for instance \cite[Theorem 4.1]{CIL}).

Next, notice that thanks to the global results of Birindelli and Demengel in \cite[Theorem 1.1]{BD2}, and the fact that $\zeta_{\varepsilon, N}$ is bounded, there exists $\beta^{\prime}\ll 1$ such that the operator
\[
\mathrm{T}:C^{1,\beta}(\overline{\Omega})\longrightarrow C^{1,\beta}(\overline{\Omega})
\]
given by $\mathrm{T}(v)=u$, where $u$ is the solution of \eqref{eq.pen2} has a fixed point in $C^{1,\beta^{\prime}}(\overline{\Omega})$. Indeed, the ball
\[
  B \defeq \{v\in C^{1,\beta^{\prime}}(\overline{\Omega}):v=g \textrm{ on }\partial \Omega,\|v\|_{C^{1,\beta}(\overline{\Omega})}\leq C\}
\]
is a convex and compact subset of $C^{1,\beta}(\overline{\Omega})$. Moreover, thanks to the \emph{a prior} estimates $\mathrm{T}$ is continuous and, for an appropriate choice of $C$, $\mathrm{T}(B)\subset B$. The existence of a fixed point is therefore guaranteed by Schauder's Fixed Point Theorem (see \cite[Theorem 11.1]{GT}). Let us label $u_\varepsilon$ such a fixed point.

Next, we want to get a uniform bound for $\zeta_{\varepsilon,N}$ so that we are able to get a bound for $u_\varepsilon$ which is uniform in $\varepsilon$. Notice that $\zeta_{\varepsilon,N}$ attains its minimum wherever $u_\varepsilon-\phi$ attains its minimum. Let us thus consider $x_0$ a minimum point for $u_\varepsilon-\phi$ and notice that
\[
  -\infty<(u_\varepsilon-\phi)(x_0)<0 \quad\textrm{ and }\quad x_0\in\Omega.
\]

Since $u_\varepsilon-\phi$ is differentiable, there exists a horizontal plane $\mathrm{P}_{x_0}$ that satisfies
\[
  \mathrm{P}_{x_0}(x)\leq (u_\varepsilon-\phi)(x)\quad\textrm{ in }\quad \Omega\quad\textrm{ and }\quad \mathrm{P}_{x_0}(x_0)= (u_\varepsilon-\phi)(x_0).
\]
In the sequel, we can use $\mathrm{P}_{x_0}+\phi$ as a test function and get
\[
  f(x_0)+\zeta_{\varepsilon,N}((u_\varepsilon-\phi)(x_0))\geq |D(\mathrm{P}_{x_0}+\phi)(x_0)|^\gamma F(D^2(\mathrm{P}_{x_0}+\phi)(x_0))
\]
which translates into
\[
   \zeta_{\varepsilon,N}((u_\varepsilon-\phi)(x_0))\geq |D\phi(x_0)|^\gamma F(D^2\phi(x_0))-f(x_0).
\]
Therefore,
\[
   \zeta_{\varepsilon,N}\geq C
\]
for some constant $C$ depending only on the $\|\phi\|_{C^{1, 1}(\Omega)}$ and $\|f\|_{L^\infty(\Omega)}$. Hence, we can drop the $N$. Further, since $\zeta_\varepsilon$ is increasing and the boundary conditions are independent of $\varepsilon$, we observe that the comparison principle provided in Theorem \ref{comparison principle}, gives that $u_\varepsilon$ is bounded (uniformly on $\varepsilon$).

Now, we take limits. Once again by the results in \cite[Theorem 1.1]{BD2} and the previous observation, the family $\{u_\varepsilon\}_{\varepsilon>0}$ of solutions to \eqref{eq.pen} are uniformly bounded in $C^{1,\beta}(\overline{\Omega})$ and therefore Arzel\`a-Ascoli's theorem ensures the existence of a function $u\in C^{1,\beta^{\prime}}(\overline{\Omega})$ and a subsequence $\{u_{\varepsilon_j}\}_j$ such that
\[
  u_{\varepsilon_j}\longrightarrow u\textrm{ in } C^{1,\beta^{\prime}}(\overline{\Omega})
\]

The bound $|\zeta_\varepsilon(u_\varepsilon-\phi)|\leq C$ ensures that $u\geq \phi$, which is the first equation in \eqref{eq.obst}. The other two equations are immediate by the stability of viscosity solutions under uniform limits (see \textit{e.g.} \cite[Theorem 3.8]{CCKS96}).
\end{proof}

We will finish this section by proving that viscosity solutions to \eqref{Eq1} are unique under a sign constraint on the source term $f$. Notice that due to the way we constructed it (by taking a subsequence) uniqueness is not \emph{a priori} ensured. The next result is a Comparison Principle adjusted to our context. Its proof is a direct consequence of Theorem \ref{comparison principle}.

\begin{lem}[{\bf Comparison Principle}]\label{lemma comparison}
Let $u_1$ and $u_2$ be continuous functions in $\Omega$ and $f \in C^0(\overline{\Omega})$ fulfilling
$$
    |Du_1|^\gamma F(D^2u_1) \leq f(x) \leq |Du_2|^\gamma F(D^2u_2) \quad  \text{ in } \quad \Omega
$$
in the viscosity sense for some uniformly elliptic operator $F$ and $\displaystyle \inf_{\Omega} f>0$ or $\displaystyle \sup_{\Omega} f<0$. If $u_1 \geq u_2$ on $\partial \Omega$, then $u_1 \geq u_2$ in $\Omega$.
\end{lem}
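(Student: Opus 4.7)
The plan is to reduce the statement to Theorem \ref{comparison principle} in its strict form (case $b\equiv 0$) by perturbing $u_2$ into a \emph{strict} viscosity subsolution of $|Du|^\gamma F(D^2u)=f$. I focus on the case $\inf_\Omega f \defeq \mathfrak{m} > 0$; the case $\sup_\Omega f<0$ is handled by a symmetric perturbation of the supersolution $u_1$, with the relevant signs reversed.

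Since both the equation $|Du|^\gamma F(D^2u)=f$ and the boundary ordering $u_1\geq u_2$ on $\partial\Omega$ are invariant under adding a common additive constant to $u_1$ and $u_2$, I may shift both functions and assume $u_2\leq 0$ on $\overline{\Omega}$. For each $\epsilon\in(0,1)$ set
\[
w_\epsilon \defeq (1+\epsilon)u_2,
\]
so that $w_\epsilon=u_2+\epsilon u_2\leq u_2\leq u_1$ on $\partial\Omega$, preserving the boundary ordering. The crux of the argument is the claim that $w_\epsilon$ is a \emph{strict} viscosity subsolution of $|Du|^\gamma F(D^2u)=f$ in $\Omega$. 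Indeed, let $\tilde\varphi\in C^2$ touch $w_\epsilon$ from above at some $x_0\in\Omega$; then $\varphi\defeq\tilde\varphi/(1+\epsilon)$ touches $u_2$ from above at $x_0$, and the subsolution property of $u_2$ yields
\[
|D\varphi(x_0)|^\gamma F(D^2\varphi(x_0))\geq f(x_0)\geq \mathfrak{m}>0,
\]
which forces both $|D\varphi(x_0)|>0$ and $F(D^2\varphi(x_0))>0$. Convexity of $F$ together with $F(0)=0$ gives the scaling inequality $F((1+\epsilon)X)\geq(1+\epsilon)F(X)$ for every symmetric matrix $X$; applying it with $X=D^2\varphi(x_0)$ and combining with $|D\tilde\varphi(x_0)|^\gamma=(1+\epsilon)^\gamma|D\varphi(x_0)|^\gamma$ gives
\[
|D\tilde\varphi(x_0)|^\gamma F(D^2\tilde\varphi(x_0))\geq(1+\epsilon)^{\gamma+1}|D\varphi(x_0)|^\gamma F(D^2\varphi(x_0))\geq(1+\epsilon)^{\gamma+1}f(x_0)>f(x_0),
\]
the last strict inequality because $f(x_0)>0$.

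With the claim in hand, Theorem \ref{comparison principle} (strict form, $b\equiv 0$) applies to the pair $(u_1,w_\epsilon)$ and yields $u_1\geq w_\epsilon=(1+\epsilon)u_2$ in $\Omega$; letting $\epsilon\downarrow 0$ delivers $u_1\geq u_2$ in $\Omega$. The main work of the argument is the strict-subsolution upgrade in the middle step, where the two standing hypotheses cooperate: convexity of $F$ supplies the scaling inequality on $F$, while the strict sign $f\geq \mathfrak{m}>0$ produces the strict gap $(1+\epsilon)^{\gamma+1}f(x_0)>f(x_0)$. The bare Pucci bounds \eqref{eq.pucci} on $F$ alone would not be enough to force strict subsolvability of $w_\epsilon$.
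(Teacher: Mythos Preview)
Your proof is correct and follows essentially the same strategy as the paper: perturb $u_2$ by a multiplicative factor greater than $1$ (plus an additive constant to preserve the boundary ordering), observe that the perturbed function is a \emph{strict} subsolution because the sign condition on $f$ turns the scaling gain into a genuine strict inequality, apply Theorem~\ref{comparison principle}, and let the perturbation parameter tend to zero. The only cosmetic differences are that you normalize $u_2\le 0$ first (while the paper offsets by $((1+\varepsilon)^{1/(1+\gamma)}-1)\|u_2\|_{L^\infty(\partial\Omega)}$), and you make explicit the role of convexity via $F((1+\epsilon)X)\ge(1+\epsilon)F(X)$, a step the paper's computation uses tacitly.
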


\begin{proof}
Firstly, let us suppose that $\displaystyle \inf_{\Omega} f>0$. For $\varepsilon>0$ fixed, we define $u^{\varepsilon}_2: \bar{\Omega} \to \R$ the auxiliary function given by
$$
 u^{\varepsilon}_2(x) = (1+\varepsilon)^{\frac{1}{1+\gamma}}u_2(x)-\left((1+\varepsilon)^{\frac{1}{1+\gamma}}-1\right)\|u_2\|_{L^{\infty}(\partial \Omega)}.
$$
Hence, we have in the viscosity sense:
$$
\left\{
\begin{array}{rclcl}
   |Du^{\varepsilon}_2|^\gamma F(D^2 u^{\varepsilon}_2) \geq (1+\varepsilon)f(x) & > & f(x) \geq |Du_1|^\gamma F(D^2u_1) &\text{in}& \Omega \\
  u^{\varepsilon}_2 \leq u_2 & \leq  & u_1 & \text{on} & \partial \Omega
\end{array}
\right.
$$
Then, by invoking the Comparison result in Theorem \ref{comparison principle} we obtain that $u^{\varepsilon}_2 \leq u_1$ in $\Omega$ for all $\varepsilon >0$. Finally, letting $\varepsilon \to 0^{+}$, we conclude that $u_2 \leq u_1$.

For the case $\displaystyle \sup_{\Omega} f<0$, we define for $\varepsilon \in (0, 1)$
$$
 u^{\varepsilon}_1(x) = (1-\varepsilon)^{\frac{1}{1+\gamma}}u_1(x)+\left((1-\varepsilon)^{\frac{1}{1+\gamma}}-1\right)\|u_1\|_{L^{\infty}(\partial \Omega)},
$$
and the rest of the proof holds as the previous case.
\end{proof}

\begin{cor}[{\bf Uniqueness}]
The solution found in Theorem \ref{Thm1} is the unique solution to \eqref{Eq1} provided $\displaystyle \inf_{\Omega} f>0$ or $\displaystyle \sup_{\Omega} f<0$.
\end{cor}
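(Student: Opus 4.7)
The plan is to reduce the uniqueness statement directly to Lemma \ref{lemma comparison}. Let $u_1$ and $u_2$ be two viscosity solutions of \eqref{Eq1} with the same boundary data $g$ and obstacle $\phi$; by symmetry it suffices to prove $u_1 \leq u_2$, which I will establish by showing that the open set
\[
D \defeq \{x \in \Omega : u_1(x) > u_2(x)\}
\]
is empty.

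On $D$, since $u_1 > u_2 \geq \phi$, one has $u_1 > \phi$, and the subsolution part of the obstacle equation forces $|Du_1|^\gamma F(D^2 u_1) = f$ in $D$ in the viscosity sense (the supersolution part of the obstacle problem already gives $|Du_1|^\gamma F(D^2 u_1) \leq f$ throughout $\Omega$). On the other hand, $u_2$ is a viscosity supersolution of the same PDE on all of $\Omega$, in particular on $D$, so $|Du_2|^\gamma F(D^2 u_2) \leq f$ there. On $\partial D$ the two solutions coincide: the interior portion of $\partial D$ lies in $\{u_1 = u_2\}$ by continuity, and the portion on $\partial \Omega$ satisfies $u_1 = u_2 = g$.

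I then invoke Lemma \ref{lemma comparison} on the subdomain $D$, with $u_2$ playing the role of the sub-PDE function and $u_1$ the super-PDE function; the hypothesis $\inf_\Omega f > 0$ (or $\sup_\Omega f < 0$) of course persists on $D$, since this is a global bound. The lemma then yields $u_2 \geq u_1$ in $D$, contradicting the very definition of $D$ unless $D = \emptyset$. Swapping the roles of $u_1$ and $u_2$ gives the reverse inequality as well, and hence $u_1 \equiv u_2$.

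The only delicate point in the plan is verifying that a viscosity solution of the $\min$ equation automatically satisfies the full PDE $|Du|^\gamma F(D^2u)=f$ on the non-coincidence set $\{u>\phi\}$, which follows from unpacking the subsolution/supersolution conditions for a $\min$: if $u(x_0)>\phi(x_0)$, then by continuity the obstacle constraint is inactive in a neighborhood of $x_0$, so the subsolution part degenerates to $|Du|^\gamma F(D^2u)\geq f$ nearby. The sign assumption on $f$ is genuinely essential, as it is exactly the hypothesis that makes Lemma \ref{lemma comparison} available; without it, one would need a different mechanism (e.g.\ a strict comparison or a zeroth order term) to rule out plateaus of coincidence between two competing solutions.
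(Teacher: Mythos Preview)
Your argument is correct and is essentially identical to the paper's own proof: both define the open set where one putative solution strictly exceeds the other, observe that the larger function satisfies the full equation there while the smaller is a supersolution, note equality on the boundary of that set, and invoke Lemma~\ref{lemma comparison} to reach a contradiction. Your additional remarks (on why the equation holds on the non-coincidence set and on the necessity of the sign condition) are accurate elaborations but do not change the underlying strategy.
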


\begin{proof}
Let $v$, $w$ be two viscosity solutions to \eqref{Eq1}. Suppose for sake of contradiction that the open set $\mathcal{O}\defeq \{w>v\}$ is nonempty. Since $w>v \geq \phi$ in $\mathcal{O}$, we know that
$$
  |Dw|^{\gamma}F(D^2 w)= f \quad \text{in} \quad \mathcal{O}
$$
in the viscosity sense. For this reason, and since $v$ is by construction a viscosity super-solution, we have
\[
  \left\{ \begin{array}{rcll}
  |Dw|^\gamma F(D^2 w)  & = & f & \textrm{ in } \,\,\,\mathcal{O} \\
  |Dv|^\gamma F(D^2 v)  & \leq & f & \textrm{ in } \,\,\,\mathcal{O} \\
  v & = & w & \textrm{ on } \partial \mathcal{O}.
  \end{array}\right.
\]
By invoking the comparison principle of Lemma \ref{lemma comparison}, we conclude that $v \geq w$ in $\mathcal{O}$, which is a contradiction. Interchanging the roles of $v$ and $w$ in the previous argument we get that the solution of \eqref{Eq1} is unique.
\end{proof}

\begin{rem}
Notice that this uniqueness result is of course consistent with the unconstrained results of \cite{BerDem} and \cite{BD2} (that follow from Theorem \ref{comparison principle}), where the condition of strict super/subsolution is required. In particular, the lack of uniqueness is not a feature coming from the free boundary nature of the problem, as does happen for instance in \cite{CDV}.

In conclusion, we would like to point out though that in the homogeneous setting $f\equiv 0$ we do have uniqueness thanks to the fact that
\[
   |Du|^\gamma F(D^2u)=0\quad \Longrightarrow \quad F(D^2u)=0 \quad (\text{the ``Cutting Lemma''})
\]
as shown in \cite[Appendix]{IS}.
\end{rem}

\section{Theorem \ref{MThm1} and its consequences}\label{sec.reg}

In this section we prove Theorem \ref{MThm1}. We start with the following remarks that will simplify the notation hereafter:

\begin{rem}[{\bf Normalization assumptions}]\label{Rem1} Firstly, we may assume without loss of generality that $u$ solves the $(F, \gamma)$-obstacle problem in $\Omega$ with obstacle $\phi$ and source term $f$ fulfilling
\[
\|\phi\|_{C^{1, \beta}(\Omega)} \leq \frac{1}{2} \quad\textrm{ and }\quad \|f\|_{L^{\infty}(\Omega)}\leq 1.
\]

As a matter of fact, let us consider the normalized function:
\[
v(x) \defeq \frac{u(x)}{\big(2^{1+\gamma}\|\phi\|^{1+\gamma}_{C^{1, \beta}(\Omega)} + \|f\|_{L^{\infty}(\Omega)}\big)^{\frac{1}{1+\gamma}}}.
\]
$v$ thus defined will satisfy an equation like \eqref{Eq1} with $F,f$ and $\phi$ replaced by $\hat{F},\hat{f}$ and $\hat{\phi}$ respectively where
\[
\left\{
\begin{array}{rcl}
  \hat{F}(X) & \defeq & \frac{1}{\big(2^{\gamma+1}\|\phi\|^{\gamma+1}_{C^{1, \beta}(\Omega)} + \|f\|_{L^{\infty}(\Omega)}\big)^{\frac{1}{1+\gamma}}}F\left(\big(2^{\gamma+1}\|\phi\|^{\gamma+1}_{C^{1, \beta}(\Omega)} + \|f\|_{L^{\infty}(\Omega)}\big)^{\frac{1}{1+\gamma}}X\right) \\
  \hat{f}(x) & \defeq & \frac{1}{2^{1+\gamma}\|\phi\|^{1+\gamma}_{C^{1, \beta}(\Omega)} + \|f\|_{L^{\infty}(\Omega)}}f(x) \\
  \hat{\phi}(x) & \defeq & \frac{1}{\big(2^{1+\gamma}\|\phi\|^{1+\gamma}_{C^{1, \beta}(\Omega)} + \|f\|_{L^{\infty}(\Omega)}\big)^{\frac{1}{1+\gamma}}}\phi(x).
\end{array}
\right.
\]
Furthermore, $\hat{F}$ is still elliptic with the same ellipticity constants as $F$, and $\hat{f}$ and $\hat{\phi}$ fall into the desired statements.

This reduction implies, in particular, that we may assume that for any free boundary point $z \in \partial \{u>\phi\}$ we have
\[
   |Du(z)| =|D\phi(z)| \leq \frac{1}{2}.
\]

For the sake of simplicity of notation we will also perform all our estimates in $B_{\frac{1}{2}}$ without loss of generality. Also, throughout the proofs $C$ will denote a universal that may change from line to line.
\end{rem}

In the sequel, the proof will be divided into two technical lemmas, following the strategy set forth in \cite{AdaSRT18}, \cite{ALS15} and \cite{daS19}. The basic idea is to deal with two regimes separately: either a free boundary point belongs to the \textit{singular zone} (see below) where the gradient is small and the operator degenerates, or otherwise the operator is uniformly elliptic and classical results apply.

The singular zone of solutions (see, \cite[Section 3]{AdaSRT18}, \cite{ALS15},  and \cite[Section 1.2]{daS19} for similar ideas) $\mathcal{S}_r^\alpha$ is defined by
$$
   \mathcal{S}_{r}^{\alpha}(B)\defeq \left\{x \in B;\,|D u(x)|\leq r^{\alpha}\right\},
$$
where $0<r \ll 1$ is small, $\alpha$ is defined by \eqref{sharpEx} and $B$ is a ball.

\begin{lem}\label{Lem1} Suppose that the assumptions of Theorem \ref{MThm1} (and Remark \ref{Rem1}) are in force. Let $x_0 \in  \partial \{u>\phi\} \cap \mathcal{S}_{r}^{\alpha}\left(B_{\frac{1}{2}}\right)$ with $0<r <\frac{1}{4}$. Then,
\begin{equation}\label{EqSRE1}
     \displaystyle \sup_{B_r(x_0)} |u(x)-u(x_0)|\leq C.r^{1+\alpha},
\end{equation}
where $C>0$ is a universal constant. In particular
\begin{equation}\label{growth}
 \sup_{B_r(x_0)} |u(x)-(u(x_0)+ D u(x_0)\cdot (x-x_0))|\leq C.r^{1+\alpha}.
\end{equation}
\end{lem}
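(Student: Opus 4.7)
The plan is to establish \eqref{EqSRE1} by bounding $u(x)-u(x_0)$ separately from below and from above in $B_r(x_0)$; the ``in particular'' statement \eqref{growth} will then be an immediate one-line triangle-inequality consequence. The lower bound follows directly from the obstacle constraint: since $x_0\in\partial\{u>\phi\}$ one has $u(x_0)=\phi(x_0)$, and from the $C^1$--regularity of $u$ (Theorem \ref{Thm1}) combined with $u\geq\phi$ one also infers $Du(x_0)=D\phi(x_0)$. The singular-zone assumption therefore gives $|D\phi(x_0)|=|Du(x_0)|\leq r^\alpha$, and combining this with $u\geq\phi$ and the normalized bound $\|\phi\|_{C^{1,\beta}(\Omega)}\leq\tfrac{1}{2}$ from Remark \ref{Rem1} yields, for every $x\in B_r(x_0)$,
\[
u(x)-u(x_0)\;\geq\;\phi(x)-\phi(x_0)\;\geq\;-|D\phi(x_0)|\,r-\tfrac{1}{2}r^{1+\beta}\;\geq\;-C\,r^{1+\alpha},
\]
using $\alpha\leq\beta$ and $r<1$.

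The upper bound is the substantive content, and I would prove it by a compactness/contradiction argument in the spirit of \cite{AdaSRT18} and \cite{daS19}. Suppose the estimate fails: then there exist sequences of solutions $u_j$ of the $(F,\gamma)$--obstacle problem, radii $r_j\downarrow 0$, and free boundary points $x_j\in\mathcal{S}_{r_j}^\alpha(B_{1/2})$ with
\[
M_j\defeq r_j^{-(1+\alpha)}\sup_{B_{r_j}(x_j)}\bigl(u_j-u_j(x_j)\bigr)\longrightarrow\infty.
\]
Introduce the rescaled functions $v_j(y)\defeq (M_jr_j^{1+\alpha})^{-1}\bigl(u_j(x_j+r_jy)-u_j(x_j)\bigr)$ for $y\in B_1$, so that $v_j(0)=0$, $\sup_{B_1}v_j=1$ and $|Dv_j(0)|\leq M_j^{-1}\to 0$. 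A direct rescaling computation, leveraging the $1$-homogeneity of the Pucci operators controlling $F$ in \eqref{eq.pucci} together with the crucial relation $\alpha(\gamma+1)\leq 1$ (which keeps the factor $r_j^{1-\alpha(\gamma+1)}$ appearing in the rescaled source bounded), shows that $v_j$ solves an $(F,\gamma)$--obstacle problem whose rescaled obstacle $\hat\phi_j$ and source term $\hat f_j$ tend uniformly to zero on $B_1$.

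The main technical obstacle, as I see it, is extracting a limit and deriving a contradiction. I would combine the uniform bound $|v_j|\leq 1$ with the interior H\"older (and $C^{1,\sigma}$) estimates for viscosity solutions of degenerate fully nonlinear equations from Imbert--Silvestre \cite{IS} and Ara\'ujo--Ricarte--Teixeira \cite{ART15}, which are scale-invariant in the present setting, to pass via Arzel\`a--Ascoli to a locally uniformly convergent subsequence $v_{j_k}\to v_\infty$. Stability of viscosity solutions under uniform limits (cf.\ \cite[Theorem 3.8]{CCKS96}) then identifies $v_\infty$ as a non-negative viscosity solution of the homogeneous obstacle problem $\min\{-|Dv_\infty|^\gamma F(D^2v_\infty),\,v_\infty\}=0$ in $B_1$, satisfying $v_\infty(0)=0$, $Dv_\infty(0)=0$ and $\sup_{B_1}v_\infty=1$. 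The ``Cutting Lemma'' of \cite{IS} reduces the equation on $\{v_\infty>0\}$ to $F(D^2v_\infty)=0$, so $v_\infty$ is a non-negative $F$--harmonic function on its positivity set vanishing together with its gradient at $0$; a Hopf-type strong minimum principle (combined with the Evans--Krylov $C^{2,\alpha}$ interior regularity granted by the convexity of $F$) then forces $v_\infty\equiv 0$, contradicting $\sup_{B_1}v_\infty=1$. Once \eqref{EqSRE1} is established, \eqref{growth} is immediate from the triangle inequality: for $x\in B_r(x_0)$, $|u(x)-u(x_0)-Du(x_0)\cdot(x-x_0)|\leq|u(x)-u(x_0)|+|Du(x_0)|\,|x-x_0|\leq(C+1)r^{1+\alpha}$, since $|Du(x_0)|\leq r^\alpha$ in the singular zone.
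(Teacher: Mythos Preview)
Your lower bound and the final deduction of \eqref{growth} from \eqref{EqSRE1} are fine, and your overall strategy (compactness/blow-up instead of a direct estimate) is a legitimate alternative to the paper's route. The paper, however, proceeds \emph{without} contradiction: it rescales once, observes that $u_{r,x_0}+\tfrac{3}{2}\ge 0$ is a supersolution and $\max\{u_{r,x_0},\sup\phi_{r,x_0}\}+\tfrac{3}{2}$ is a subsolution of degenerate equations with bounded right-hand side, and then applies directly the Weak Harnack inequality (Theorem~\ref{wharn}) followed by the Local Maximum Principle (Theorem~\ref{localmax}) from the Appendix to bound $|u_{r,x_0}|$. This gives \eqref{EqSRE1} in one step with an explicit constant.

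Your argument, as written, has two genuine gaps. First, the compactness step: you invoke the interior $C^{1,\sigma}$ estimates of \cite{IS} and \cite{ART15}, but those are for the \emph{unconstrained} equation, whereas the $v_j$ solve an obstacle problem. On the contact set the PDE is not available, and the rescaled operators $\tilde F_j(X)=s_j^{-1}F(s_jX)$ need not converge unless the recession profile exists. To get equicontinuity you would in fact end up using precisely the Harnack-type inequalities for degenerate supersolutions/subsolutions from \cite{Imb} and \cite{IS2}, which is what the paper applies directly; you would also have to take care that $\sup_{B_1}v_j=1$ survives in the limit (the convergence is only locally uniform, so the supremum may escape to $\partial B_1$ without a standard half-radius or doubling modification).

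Second, and more seriously, the final contradiction via ``Hopf $+$ Evans--Krylov'' does not work as stated. The limit $v_\infty$ solves $G(D^2v_\infty)=0$ only on the open set $\{v_\infty>0\}$; the origin lies in $\{v_\infty=0\}$, and you have no interior ball of $\{v_\infty>0\}$ touching $0$, so Hopf does not apply there. Evans--Krylov gives regularity only inside $\{v_\infty>0\}$, not up to its free boundary. What you actually need is a strong minimum principle for nonnegative viscosity supersolutions of $|Dv|^\gamma\mathcal{M}^-_{\lambda,\Lambda}(D^2v)\le 0$ (which $v_\infty$ satisfies everywhere, by ellipticity of the $\tilde F_j$ and stability), forcing $v_\infty\equiv 0$ from $v_\infty(0)=0$; such a principle is available in the Birindelli--Demengel framework but is not the Hopf lemma. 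With these fixes your approach can be completed, though it becomes considerably heavier than the paper's one-shot Harnack argument.
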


\begin{proof} First, for a fixed $0<r < \frac{1}{4}$ we define the re-scaled auxiliary functions:
\[
u_{r, x_0}(x) \defeq \frac{u(rx+x_0)-u(x_0)}{r^{1+\alpha}}  \quad \textrm{ and } \quad  \phi_{r, x_0}(x) \defeq \frac{\phi(rx+x_0)-\phi(x_0)}{r^{1+\alpha}}.
\]
Now, notice that $u_{r, x_0}$ is a viscosity solution to the $(F_{r, x_0}, \gamma)-$obstacle, i.e. it satisfies
\begin{equation}\label{EqG-Obs}
|D u_{r, x_0}|^\gamma F_{r, x_0}(D^2 u_{r, x_0})\leq  f_{r, x_0}\quad\textrm{ and }\quad u_{r,x_0}\geq \phi_{r, x_0}
\end{equation}
in $B_1$ where
\[
\left\{
\begin{array}{rcl}
  F_{r, x_0} (X) & = & r^{1-\alpha}F\left(\frac{1}{r^{1-\alpha}} X\right) \\
  f_{r, x_0}(x) & = & r^{1-\alpha(1+\gamma)}f(rx+x_0).
\end{array}
\right.
\]
It is straightforward to check that $F_{r, x_0}$ is still uniformly elliptic (with the same ellipticity constants as $F$). Moreover, it follows by \eqref{sharpEx} that
\[
\|f_{r, x_0}\|_{L^{\infty}(B_1)} \leq 1.
\]

Now, we may estimate the $L^{\infty}-$norm of $\phi_{r, x_0}$ as follows
$$
\begin{array}{rcl}
  \|\phi_{r, x_0}\|_{L^{\infty}(B_1)} & = & \left\|\frac{\phi(rx+x_0)-\phi(x_0)}{r^{1+\alpha}}\right\|_{L^{\infty}(B_1)} \\
   & \leq  & \left\|\frac{\phi(rx+x_0)-\phi(x_0)-rD\phi(x_0) \cdot (x-x_0) }{r^{1+\alpha}}\right\|_{L^{\infty}(B_1)} + \left\|\frac{Du(x_0) \cdot (x-x_0) }{r^{\alpha}}\right\|_{L^{\infty}(B_1)}\\
   & \leq & \frac{3}{2}.
\end{array}
$$
so the function $u_{r, x_0}(x) + \frac{3}{2}$ is non negative and it is still a supersolution.

Hence, by applying the weak Harnack inequality (see equation \eqref{EqWHN} and Theorem \ref{wharn} in the Appendix) and recalling that $u_{r,x_0}(0)=\phi_{r,x_0}(0)$ we obtain

\begin{align}\label{eq.weakhar}
\left\|u_{r, x_0} + \frac{3}{2}\right\|_{L^{p_0}\left(B_{\frac{1}{2}}\right)} & \leq   \displaystyle C.\left(\inf_{B_{1}} \left(u_{r, x_0}(x) + \frac{3}{2}\right)+\|f_{r,x_0}\|_{L^{\infty}(B_1)}\right)	 \nonumber \\
   & \leq  C.\left( \phi_{r, x_0}(0) + \frac{3}{2}+ \|f_{r,x_0}\|_{L^{\infty}(B_1)}\right) \nonumber\\
   &\leq C,
\end{align}
for some universal $p_0>0$ universal.

Now let us consider
\[
 \displaystyle w(x)\defeq \max\left\{u_{r,x_0}(x),\,\sup_{B_1}\phi_{r,x_0}(x)\right\} + \frac{3}{2}.
\]
Notice that $w$ is non-negative sub-solution (in the viscosity sense) to
\[
|Dw|^\gamma F_{r, x_0}(D^2w) = f_{r,x_0}\chi_{\{u_{r,x_0}>\sup\phi_{r,x_0}\}}
\]
and hence by invoking the Local Maximum Principle (see equation \eqref{EqLMP} and Theorem \ref{localmax} in the Appendix) we get that
\[
   \sup_{B_{\frac{1}{4}}}w  \leq \displaystyle C.\left(\left\|w\right\|_{L^{p_0}\left(B_{\frac{1}{2}}\right)}+ \|f_{r,x_0}\|_{L^{\infty}(B_1)}\right).
\]
Combining this estimate with the definition of $w$ and with equation \eqref{eq.weakhar} we get (upon relabeling the constants and some elementary manipulations)
\[
\displaystyle \sup_{B_{\frac{1}{4}}} u_{r, x_0}(x) \leq C.
\]
But on the other hand, since
$$
  u_{r, x_0}(x) \geq - \|\phi_{r, x_0}\|_{L^{\infty}(B_1)} \geq - \frac{3}{2}
$$
we conclude that $u_{r, x_0}$ is uniformly bounded in $B_{\frac{1}{4}}$.

From here we get \eqref{EqSRE1} for $r \in \left(0, \frac{1}{4}\right)$ just by recalling the definition of $u_{r,x_0}$. Since \eqref{growth} follows by the triangle inequality this completes the proof.
\end{proof}

In the following second Lemma, we will analyze the points outside the singular zone, where classical estimates apply.

\begin{lem}\label{Lem2} Suppose that the assumptions of Theorem \ref{MThm1} (and Remark \ref{Rem1}) are in force. Let $z_0 \in  \partial \{u>\phi\} \cap \left(B_{\frac{1}{2}} \setminus \mathcal{S}_{r}^{\alpha}\left(B_{\frac{1}{2}}\right)\right)$ with $0<r < \frac{1}{4}$. Then,
\begin{equation}\label{EqSRE2}
     \displaystyle \sup_{B_{r_{\ast}}(z_0)} |u(x)-u(z_0)|\leq C.r_{\ast}^{1+\alpha},
\end{equation}
for $0<r_{\ast}(r) < \frac{1}{4}$, where $C>0$ is a universal constant.
\end{lem}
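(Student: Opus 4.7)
The lemma addresses the ``non-degenerate'' regime $\mu:=|Du(z_0)|>r^\alpha$, in which the operator is morally uniformly elliptic near $z_0$. My plan is to rescale at the natural scale $\rho:=\mu^{1/\alpha}$ (note $\rho>r$) so that the rescaled gradient has unit size at the origin, invoke classical obstacle problem regularity for uniformly elliptic convex operators, and finally translate the resulting estimate back to $u$ at the scale $r_*:=\sigma\rho/2$ for a universal $\sigma$. Since $z_0\in\partial\{u>\phi\}$ and $u\in C^{1,\beta'}$ by Theorem \ref{Thm1}, a minimum argument gives $u(z_0)=\phi(z_0)$ and $Du(z_0)=D\phi(z_0)$.

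\textbf{The rescaled obstacle problem.} Setting $e_0:=Du(z_0)/\mu$, I would consider
\[
\hat v(y):=\frac{u(z_0+\rho y)-u(z_0)-\rho\,Du(z_0)\cdot y}{\rho^{1+\alpha}},\quad \hat\psi(y):=\frac{\phi(z_0+\rho y)-\phi(z_0)-\rho\,D\phi(z_0)\cdot y}{\rho^{1+\alpha}}.
\]
Then $\hat v\ge\hat\psi$ and $\hat v(0)=\hat\psi(0)=0$, $D\hat v(0)=D\hat\psi(0)=0$. A direct computation shows that $\hat v$ solves in $B_1$ the obstacle problem
\[
\min\bigl\{\tilde f-|e_0+D\hat v|^\gamma F_\rho(D^2\hat v),\;\hat v-\hat\psi\bigr\}=0,
\]
with $F_\rho(X):=\rho^{1-\alpha}F(\rho^{\alpha-1}X)$ (uniformly elliptic and convex with the same constants as $F$) and $\tilde f(y):=\rho^{1-\alpha(1+\gamma)}f(z_0+\rho y)$. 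The sharp choice of $\alpha$ in \eqref{sharpEx} together with $\rho\le 1$ (from Remark \ref{Rem1}) guarantee $\|\tilde f\|_{L^\infty(B_1)}\le 1$, while $\phi\in C^{1,\beta}(\Omega)$ together with $\alpha\le\beta$ give the universal bound $\|\hat\psi\|_{C^{1,\beta}(B_1)}\le C$.

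\textbf{Local uniform ellipticity and classical regularity.} Since $D\hat v(0)=0$, $|e_0+D\hat v(0)|=1$. Applying the basic $C^{1,\beta'}$ estimate of Theorem \ref{Thm1} to the rescaled obstacle problem, whose data are now bounded by universal constants, yields a universal modulus of continuity for $D\hat v$ and hence a universal radius $\sigma\in(0,1/2)$ such that $|e_0+D\hat v(y)|\ge 1/2$ on $B_\sigma$. On $B_\sigma$ the equation may be rewritten as $F_\rho(D^2\hat v)=\tilde f/|e_0+D\hat v|^\gamma$ on $\{\hat v>\hat\psi\}$, with a uniformly bounded right-hand side. Thus on $B_\sigma$, $\hat v$ solves an obstacle problem for a \emph{uniformly elliptic, convex} operator with bounded source and a $C^{1,\beta}$ obstacle of universally bounded norm. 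The classical $C^{1,\alpha}$-regularity at free boundary points for such problems (cf.~\cite{BLOP,LS01}) applied at $y=0$ then produces $|\hat v(y)|\le C|y|^{1+\alpha}$ in $B_{\sigma/2}$.

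\textbf{Undoing the rescaling and the main obstacle.} Setting $r_*:=\sigma\rho/2$, translating the estimate on $\hat v$ back to $u$ gives $|u(x)-u(z_0)-Du(z_0)\cdot(x-z_0)|\le C|x-z_0|^{1+\alpha}$ for $x\in B_{r_*}(z_0)$. Using $\mu=\rho^\alpha$ we estimate the linear term by $|Du(z_0)|\,|x-z_0|\le \mu r_* = (2/\sigma)^\alpha r_*^{1+\alpha}$, and the triangle inequality delivers the desired $\sup_{B_{r_*}(z_0)}|u(x)-u(z_0)|\le C r_*^{1+\alpha}$. The admissibility $r_*\in(0,1/4)$ follows from $\mu\le 1/2$ (Remark \ref{Rem1}) and $\sigma<1/2$. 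The principal technical difficulty is the production of the \emph{universal} radius $\sigma$ in which $|e_0+D\hat v|$ stays bounded below: this requires a scale-invariant $C^{1,\beta'}$ bound for $\hat v$, which is exactly what Theorem \ref{Thm1} affords because the rescaled problem has universally bounded data.
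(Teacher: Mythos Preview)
Your overall strategy --- rescale at the natural scale $\rho=|Du(z_0)|^{1/\alpha}$, use the basic $C^{1,\beta'}$ regularity to find a universal neighbourhood where the gradient stays bounded away from zero, and then invoke classical uniformly elliptic obstacle theory --- is exactly the paper's approach. However, there is a genuine gap in the step where you invoke Theorem \ref{Thm1} on the rescaled problem.

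You assert that the rescaled obstacle problem ``has universally bounded data'' and hence Theorem \ref{Thm1} yields a universal $C^{1,\beta'}$ bound on $\hat v$. But the estimate in Theorem \ref{Thm1} (or any interior $C^{1,\beta'}$ estimate for these equations) depends on the boundary datum, or equivalently on $\|\hat v\|_{L^\infty}$, not only on the obstacle and the source. You have controlled $\tilde f$ and $\hat\psi$ universally, but said nothing about $\|\hat v\|_{L^\infty(B_1)}$. Attempting to extract this from the original $C^{1,\beta'}$ bound on $u$ gives only $|\hat v(y)|\lesssim \rho^{\beta'-\alpha}$, which blows up as $\rho\to 0$ since in general $\beta'<\alpha$. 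Without a scale-invariant $L^\infty$ bound on $\hat v$ the argument does not close, and your ``universal'' radius $\sigma$ is not actually universal.

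The paper fills precisely this gap by \emph{first} applying Lemma \ref{Lem1} at the threshold radius $r_{z_0}=\rho$: since $|Du(z_0)|=\rho^\alpha$, the point $z_0$ belongs to $\mathcal{S}^\alpha_\rho$, and Lemma \ref{Lem1} delivers $\sup_{B_\rho(z_0)}|u-u(z_0)|\le C\rho^{1+\alpha}$, i.e., the rescaled solution is uniformly bounded in $L^\infty(B_1)$. Only then is the $C^{1,\beta'}$ estimate applied, and from there the argument proceeds as you describe. The ``principal technical difficulty'' you identify is real, but its resolution rests on this prior $L^\infty$ control coming from Lemma \ref{Lem1}, which you have omitted and which is not for free.
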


\begin{proof} Let $0<r < \frac{1}{4}$ fixed and $z_0 \in  B_{\frac{1}{2}}$ such that $|D u(z_0)|\geq r^{\alpha}$. By taking the particular case $r_{z_0} = \sqrt[\alpha]{|D u(z_0)|}$ we are allowed to apply the Lemma \ref{Lem1} and conclude that
\begin{equation}\label{EqCzse1}
     \displaystyle \sup_{B_{r_{z_0}}(z_0)} |u(x)-u(z_0)|\leq C.r_{z_0}^{1+\alpha}.
\end{equation}
As before, we define the re-scaled auxiliary functions:
$$
\left\{
\begin{array}{rcl}
u_{r_{z_0}, z_0}(x) & \defeq & \frac{u(r_{z_0}x+z_0)-u(z_0)}{r_{z_0}^{1+\alpha}}\\
\phi_{r_{z_0}, z_0}(x) & \defeq & \frac{\phi(r_{z_0}x+z_0)-\phi(z_0)}{r_{z_0}^{1+\alpha}}\\
 f_{r_{z_0}, z_0}(x) & \defeq & r_{z_0}^{1-\alpha(\gamma+1)}f(r_{z_0}x+z_0).
\end{array}
\right.
$$
Notice that
\begin{equation}\label{EqEstGrad}
  |Du_{r_{z_0}, z_0}(0)| = |D\phi_{r_{z_0}, z_0}(0)| = 1 \quad \mbox{and} \quad \|f_{r, x_0}\|_{L^{\infty}(B_1)} \leq 1.
\end{equation}

Moreover, $u_{r_{z_0}, z_0}$ is a viscosity solution to an $(F_{r_{z_0}, z_0}, \gamma)-$obstacle problem as in \eqref{EqG-Obs}. Particularly, $u_{r_{z_0}, z_0}$ fulfills (in the viscosity sense) the following:
$$
  \min\left\{ f_{r_{z_0},z_0}-|D u_{r_{z_0}, z_0}|^{\gamma}F(D^2 u_{r_{z_0}, z_0}), \,\, u_{r_{z_0}, z_0}-\phi_{r_{z_0}, z_0}\right\} = 0.
$$

Now, from the assumption $\|\phi\|_{C^{1, \beta}(B_1)} \leq \frac{1}{2}$ (see Remark \ref{Rem1}) we get
$$
   \|\phi_{r_{z_0}, z_0}\|_{C^{1, \beta}\left(B_{\frac{1}{2}}\right)} \leq C^{\sharp} \,\,\,\mbox{(universal constant)}.
$$
Moreover, \eqref{EqCzse1} assures us that $u_{r_{z_0}, z_0}$ is uniformly bounded in the $L^{\infty}\left(B_{\frac{1}{2}}\right)-$topology. From estimates given in Theorem \ref{Thm1} it follows that
$$
   \|u_{r_{z_0}, z_0}\|_{C^{1, \beta^{\prime}}\left(B_{\frac{1}{2}}\right)} \leq C_1^{\sharp} \quad (\mbox{universal constant}).
$$
Such an estimate and the sentence \eqref{EqEstGrad}, allow us to find a radius $r_0 \ll 1$ (universal) so that
$$
   \mathfrak{c}\leq |D u_{r_{z_0}, z_0}(x)| \leq \mathfrak{c}^{-1} \quad \forall\,\,\,x \in B_{r_0}(z_0)\,\,\,\mbox{and}\,\,\,\mathfrak{c}\in (0, 1)\,\,\,\mbox{fixed}.
$$
Particularly, we obtain the following (in the viscosity sense)
$$
   \min\left\{\mathrm{K}-F(D^2 u_{r_{z_0}, z_0}), \,\, u_{r_{z_0}, z_0}-\phi_{r_{z_0}, z_0} \right\}=0,
$$
where $\mathrm{K} = \mathrm{K}\left(\mathfrak{c}, \alpha, \gamma, \|f_{r_{z_0}, z_0}\|_{L^{\infty}(B_1)}\right)>0$.

Consequently, $u_{r_{z_0}, z_0}$ is a viscosity solution (uniformly bounded) to an obstacle-type problem for a uniformly elliptic operator in $B_{r_0}(z_0)$, with a $C^{1,\beta}$ obstacle ($\phi_{r_{z_0}, z_0}$) and constant (positive) source term $\mathrm{K}$. From theory for obstacle-type problems (see \cite[Theorem 5.4]{BLOP})
$$
   \|u_{r_{z_0}, z_0}\|_{C^{1, \alpha}\left(B_{r_0}\right)} \leq C(\beta, \gamma, \Lambda, \lambda, n)
$$
By scaling back we conclude that
$$
   \|u\|_{C^{1, \alpha}\left(B_{r_0r_{z_0}(z_0)}\right)} \leq C(\beta, \gamma, \Lambda, \lambda, n),
$$
which particularly implies that
$$
\displaystyle \sup_{B_r(z_0)} |u(x)-(u(z_0)+ D u(z_0)\cdot (x-z_0))|\leq C.r^{1+\alpha},
$$
for all $r<r_0r_{z_0}$.
\end{proof}

Finally, we are in position to supply the proof of Theorem \ref{MThm1}.

\begin{proof}[\bf{Proof of Theorem \ref{MThm1}}]  Remember that from Lemma \ref{Lem1} or \ref{Lem2} we have the following:
$$
   \displaystyle \sup_{B_r(y_0)} |u(x)-(u(y_0)+ D u(y_0)\cdot (x-y_0))|\leq C.r^{1+\alpha},
$$
for every $r \in \left(0, \frac{1}{4}\right)$ such that $r^{\alpha}> |Du(y_0)|$ (Lemma \ref{Lem1}) and $r^{\alpha} \leq \sqrt[\alpha]{r_0}|Du(y_0)| = \sqrt[\alpha]{r_0}r_{y_0}$ (Lemma \ref{Lem2}), where $y_0 \in \partial \{u>\phi\} \cap B_{\frac{1}{2}}$. Next prove the desired estimate when
\begin{equation}\label{EqintervR}
  r \in \left(r_0\sqrt[\alpha]{|Du(y_0)|}, \sqrt[\alpha]{|Du(y_0)|}\right).
\end{equation}

For that purpose, suppose that $r$ falls into interval specified in \eqref{EqintervR}. Hence,
$$
\begin{array}{rcl}
  \displaystyle \sup_{B_r(y_0)} |u(x)-(u(y_0)+ D u(y_0)\cdot (x-y_0))| & \leq  & \displaystyle \sup_{B_{r_{y_0}}(y_0)} |u(x)-(u(y_0)+ D u(y_0)\cdot (x-y_0))| \\
   & \leq  & C.r_{y_0}^{1+\alpha}\\
   & \leq & \frac{C}{r_{0}^{1+\alpha}}
   r^{1+\alpha}.
\end{array}
$$
Thus, we obtain the estimate for all $r \in \left(0, \frac{1}{4}\right)$.

This gives the result in $B_{\frac{1}{2}}$. Getting it for $\tilde{\Omega}$ is just a standard covering procedure.  Moreover, taking into account Remark \ref{Rem1}, in order to obtain the desired $C^{1, \alpha}$ regularity estimate for the original solution $u$ (equation \eqref{EqSRE}), one just needs to multiply the constant $C>0$ by the normalization factor $\left(2^{\gamma+1}\|\phi\|_{C^{1, \beta}(\Omega)}^{\gamma+1}+\|f\|_{L^{\infty}(\Omega)}\right)^{\frac{1}{\gamma+1}}$.

Finally, to obtain \eqref{detach} we just compute
$$
\begin{array}{rcl}
  \displaystyle \sup_{B_r(y_0)} |u(x)-\phi(x)| & \leq  & \displaystyle \sup_{B_{r}(y_0)} |u(x)-[u(y_0)+ D u(y_0)\cdot (x-y_0)]|\\
  & + & \displaystyle  \sup_{B_{r}(y_0)} |\phi(x)-[\phi(y_0)+ D \phi(y_0)\cdot (x-y_0)]|\\
   & \leq  & (C+1).r^{1+\alpha}.
\end{array}
$$
\end{proof}

\begin{proof}[{\bf Proof of Corollary \ref{Cor1}}]
We simply note that since $F$ is convex and $\phi\in C^{1,\frac{1}{\gamma+1}}(\Omega)$ then $\alpha=\frac{1}{\gamma+1}$. The result then follows
by combining Theorem \ref{MThm1} with the estimates for the unconstrained problem obtained in \cite[Corollary 3.1]{ART15}.
\end{proof}

\begin{proof}[{\bf Proof of Corollary \ref{Cor2}}]
It follows by noticing that if $f\equiv0$ we can take $\alpha=1$ when making the re-scaling.
\end{proof}

\subsection{Examples}\label{Examples}

We conclude this section presenting some interesting examples where our results hold.

\begin{example} An interesting application of our results when $F(X)=\tr X$ and $f\equiv 1$. In that case, we get the seemingly simple problem
\[
  \left\{ \begin{array}{rcll}
   u & \geq  & \phi & \textrm{ in } \Omega \\
  |Du|^\gamma\Delta u & \leq  & 1 & \textrm{ in } \Omega \\
  |Du|^\gamma\Delta u & = & 1 & \textrm{ in } \Omega\cap\{u>\phi\}. \\
\end{array}\right.
\]
To the best of the authors' knowledge, no results whatever were available for this toy model. According to Theorem \ref{MThm1} and its corollaries, if $\phi \in C^{1, \frac{1}{\gamma+1}}(\Omega)$ our results give $C^{1,\frac{1}{\gamma+1}}$ regularity, which is the optimal regularity of the unconstrained problem, see for instance \cite[Corollary 3.2]{ART15}, \cite[Example 1]{IS} and the references therein. Moreover, the results hold true for a general bounded source term $f$. Further, if $\phi \in C^{1,1}(\Omega)$ then $u\in C^{1,1}$ at free boundary points, hence we recover the optimal result for the classical obstacle problem in this context.
\end{example}

\begin{example} Our results also hold for Pucci's extremal operators (see the Introduction):
$$
F(D^2u)\defeq \mathcal{M}_{\lambda, \Lambda}^{\pm}(D^2 u)
$$
and, more generally, cover Belmann's type equations, which appear in stochastic control problem as an optimal cost:
\[
\displaystyle F(D^2 u) \defeq \inf_{\hat{\alpha} \in \mathcal{A}}\left(L^{\hat{\alpha}} u(x)\right) \quad \left(\text{resp.}\,\,\,\sup_{\hat{\alpha} \in \mathcal{B}}  \left(L^{\hat{\alpha}} u(x)\right)\right),
\]
   and
      $$
         \displaystyle \mathcal{L}^{\hat{\alpha}} u(x) = \sum_{i, j=1}^{n} a_{ij}^{\hat{\alpha}}\partial_{ij} u
      $$
is a family of uniformly elliptic translation invariant operators with ellipticity constants $\lambda$ and $\Lambda$.
\end{example}

\begin{example} As mentioned before in Remark \ref{RemRecessOper} our results hold for operators whose recession profile enjoys of appropriate \textit{a priori} estimates (see, once again, \cite{daS19}, \cite{PimTei16} and \cite{ST15}). By didactic and illustrative reasons, we will exhibit some operators and its recession counterpart. Let $$0 < \sigma_1, \ldots , \sigma_n< \infty.$$
We have the following examples:
\begin{enumerate}
  \item[(E1)]({\bf $m$-momentum type operators}) Let $m$ be an odd number. \textit{The $m$-momentum type operator} given by
  $$
\displaystyle  F_m(D^2 u) = F_m(e_1(D^2 u), \cdots, e_n(D^2 u)) \defeq \sum_{j=1}^{n} \sqrt[m]{\sigma_j^m+e_j(D^2 u)^m}-\sum_{j=1}^{n} \sigma_j.
  $$
defines a uniformly elliptic operator which is neither concave nor convex. Moreover,
$$
  \displaystyle F_m^{\ast}(X) = \lim_{\tau \to 0+} \tau F_m\left(\frac{1}{\tau} X\right) = \sum_{j=1}^{n} e_j(X)
$$
the Laplacian operator.

  \item[(E2)]({\bf Perturbation of ``non-isotropic'' Pucci's operators}) Let us consider
  $$
 F(D^2 u) = F(e_1(D^2 u), \cdots, e_n(D^2 u)) \defeq \sum_{j=1}^{n} \left[h(\sigma_j)e_j(D^2 u) + g(e_j(D^2 u))\right],
  $$
  where $h:[0, \infty) \to \R$ is a continuous function with $h(0) = 0$ and $g: \R \to \R$ is any Lipschitz function such that $g(0) = 0$. Notice that $F$ is uniformly elliptic operator.
  Moreover,
  $$
  \displaystyle F^{\ast}(X) = \lim_{\tau \to 0+} \tau F \left(\frac{1}{\tau} X\right) = \sum_{j=1}^{n} h(\sigma_j)e_j(X),
  $$
which is, up a change of coordinates, the Laplacian operator.

  \item[(E3)]({\bf Perturbation of the Special Lagrangian equation}) Given $h:\R_{+} \to \R$ a continuous function, the \textit{``perturbation'' of the Special Lagrangian equation}
$$
 F(D^2 u) =  F(e_1(D^2 u), \cdots, e_n(D^2 u)) \defeq \sum_{j=1}^{n} \left[h(\sigma_j)e_j(D^2 u) + \arctan(e_j(D^2 u))\right].
$$
defines a uniformly elliptic operator which is neither concave nor convex. Furthermore,
  $$
  \displaystyle F^{\ast}(X) = \lim_{\tau \to 0+} \tau F \left(\frac{1}{\tau} X\right) = \sum_{j=1}^{n} h(\sigma_j)e_j(X),
  $$
which is precisely a ``perturbation'' of the Laplace operator.
\end{enumerate}
\end{example}

\begin{example} Notice that as $F: \textit{Sym}(n) \to \R$, the \textit{recession profile} $F^{\ast}$ should be understood as the limiting equation for the natural scaling on $F$. By way of illustration, for a number of operators, it is possible to check the existence of the limit
$$
  \mathfrak{A}_{ij} \defeq \lim_{\|X\|\to \infty} F_{ij}(X),
$$
where $F_{ij}(X) = \frac{\partial F}{X_{ij}}(X)$. In this situation, $F^{\ast}(X) = \tr(\mathfrak{A}_{ij}X)$. A particular example is the class of Hessian operators of the form:
$$
\displaystyle  F_m(e_1(D^2 u), \cdots, e_n(D^2 u)) \defeq \sum_{j=1}^{n} \sqrt[m]{1+e_j(D^2 u)^m}  - n,
$$
where $m \in \mathbb{N}$ (an odd number). In such a case, $\displaystyle F^{\ast}(X) = \sum_{j=1}^{n} e_j(X)$ (the Laplacian).
\end{example}

\section{Non-degeneracy results and Lebesgue measure of the free boundary}\label{sec.fb}

This section is devoted to prove Theorems \ref{ThmNonDeg} and \ref{ThmNonDegHom} and their corollaries related to geometric non-degeneracy. They play an essential role in the description of solutions to free boundary problems of obstacle-type.

\begin{proof}[{\bf Proof of Theorem \ref{ThmNonDeg}}]
Notice that, due to the continuity of solutions, it is sufficient to prove that such an estimate is satisfied just at point within $\{u>\phi\} \cap \tilde{\Omega}$ for $ \tilde{\Omega}\subset\subset\Omega$.

First of all, for $x_0 \in \{u>\phi\} \cap \tilde{\Omega}$ and $r$ small enough so that $B_r(x_0)\subset\{u>\phi\} \cap \tilde{\Omega}$ let us define the scaled function
$$
    u_{r}(x) \defeq \frac{u(x_0+rx)}{r^{1+\frac{1}{1+\gamma}}} \quad  \mbox{for} \quad  x \in B_1.
$$

Now, let us introduce the comparison function:
$$
   \displaystyle \varphi (x) \defeq \left\{\frac{ \mathfrak{m} \left(\gamma+1 \right)^{\gamma + 2} }{\left[\lambda+n(\gamma+1)\Lambda\right]\left( \gamma+2\right)^{\gamma + 1}}\right\}^{\frac{1}{\gamma+1}} |x|^{1+\frac{1}{1+\gamma}} + \frac{1}{r^{1+\frac{1}{1+\gamma}}}\phi(x_0).
$$

Straightforward calculus shows that
$$
  |D \varphi|^\gamma   \mathcal{G}(D^2 \varphi) - \hat{f}\left( x  \right)\leq 0 \quad \text{in} \quad B_1
$$
and
$$
   |D u_r|^\gamma\mathcal{G}(D^2 u_{r}) - \hat{f}\left( x  \right)= 0  \quad \text{in} \quad B_1\cap \{u_{r}> \phi_{r}\}
$$
in the viscosity sense, where
$$
\left\{
\begin{array}{rcl}
  \mathcal{G}(X) & \defeq & r^{\frac{\gamma}{\gamma+1}}F\left(r^{-\frac{\gamma}{\gamma+1}}X\right) \\
  \hat{f}(x) & \defeq & f(x_0 + rx) \\
  \phi_{r}(x) & \defeq & \frac{\phi(x_0+rx)}{r^{1+\frac{1}{1+\gamma}}}
\end{array}
\right.
$$
Moreover, $\mathcal{G}$ is uniformly elliptic according to \eqref{eq.pucci} and $\displaystyle \inf_{B_1} \hat{f} \geq \inf_{\Omega} f>0$.

Finally, if $u_{r} \leq \varphi$ on $\partial(B_1 \cap \{u_{r}> \phi_{r}\})$ then the Comparison Principle (Lemma \ref{lemma comparison}), would imply that
$$
   u_{r} \leq \varphi \quad \mbox{in} \quad B_1 \cap \{u_{r}> \phi_{r}\},
$$
which clearly contradicts the fact that $u_{r}(0)>\phi_{r}(0)$. Therefore, there exists a point $y \in \partial (B_1 \cap \{u_{r}> \phi_{r}\})$ such that
\[
      u_{r}(y) > \varphi(y).
\]
To conclude, we just notice that (by the choice of $r$) such a point must belong to $\partial B_1\cap \{u_{r}> \phi_{r}\}$ so that then
\[
\frac{1}{r^{1+\frac{1}{1+\gamma}}}.\sup_{B_r(x_0)}u\geq u_r(y)\geq \mathfrak{c}+\frac{1}{r^{1+\frac{1}{1+\gamma}}}\phi(x_0)
\]
and we get the desired result.
\end{proof}

Now, we will prove the non-degeneracy of the gradient.

\begin{proof}[{\bf Proof of Corollary \ref{CorNonDegGrad}}]
In effect, let $x_0 \in \{u>\phi\} \cap B_{\frac{1}{2}}$ and $y_0 \in \partial \{u>\phi\}$ such that
$$
  r \defeq \text{dist}(x_0, \partial \{u>\phi\}) = |x_0 - y_0|.
$$
Now, from non-degeneracy, Theorem \ref{ThmNonDeg}, there exists $z \in \partial B_r(x_0)$ such that
$$
   u(z) - \phi(x_0) = \mathfrak{c}r^{1+\frac{1}{1+\gamma}}.
$$
Moreover, from local Lipschitz regularity of $u$ and $\phi$ we get
$$
\begin{array}{rcl}
  u(z) - \phi(x_0) & = & u(z) - u(y_0) + \phi(y_0) -  \phi(x_0) \\
   & \leq & \|Du\|_{L^{\infty}(B_r(x_0))}|z-y_0| + \|D \phi\|_{L^{\infty}(B_r(x_0))}|y_0-x_0| \\
   & \leq & 2r\|Du\|_{L^{\infty}(B_r(x_0))} + r\|D \phi\|_{L^{\infty}(B_r(x_0))}
\end{array}
$$
Therefore, by using the previous estimates we obtain the desired result
$$
  \displaystyle \|Du\|_{L^{\infty}(B_r(x_0))} \geq \mathfrak{c}.r^{\frac{1}{\gamma+1}} - \frac{1}{2}\|D\phi\|_{L^{\infty}(B_r(x_0))}.
$$
\end{proof}

\begin{rem} An interesting piece of information about previous Corollary \ref{CorNonDegGrad} is the following: when the gradient of the obstacle is ``flat'' enough, i.e. $\|D\phi\|_{L^{\infty}(B_r(x_0))} \ll 1$, then solutions to our obstacle problem present an almost $\frac{1}{\gamma+1}-$growth away from free boundary points (cf. \cite[Theorem 3.3]{ART15} for a similar non-degeneracy property). Furthermore, a legitimate $\frac{1}{\gamma+1}-$behavior is brought to light once we suppose that $\|D\phi\|_{L^{\infty}(B_r(x_0))} \leq \mathfrak{c}_0r^{\frac{1}{\gamma+1}}$ and $\mathfrak{c}>2\mathfrak{c}_0$, which is not a restrictive assumption, due to explicit universal dependence of constant $\mathfrak{c}$.

\end{rem}

Next we will prove our second non-degeneracy result.

\begin{proof}[{\bf Proof of Theorem \ref{ThmNonDegHom}}]

Let $y \in \{u > \phi\}\cap\tilde{\Omega}$ and $v(x) = \phi(x) +\varepsilon|x-y|^2$, where $\varepsilon \ll 1$ is chosen such that $|Dv|^{\gamma}F(D^2 v)<0$, which is possible since the second order degenerate fully nonlinear operator in force is continuous with respect to parameter $\varepsilon$.

Now, by putting $r<\mbox{dist}(x_0, \partial \tilde{\Omega})$, we obtain that
$$
  |Dv|^{\gamma}F(D^2 v)<0 = |Du|^{\gamma}F(x, D^2 u) \quad \mbox{in} \quad \{u>\phi\}\cap B_r(x_0)
$$
in the viscosity sense. Furthermore, $u(y) \geq \phi(y) = v(y)$. By invoking the Comparison Principle (Theorem  \ref{comparison principle}) it follows that there is $z_y\in \partial(\{u>\phi\}\cap B_r(x_0))$ such that $u(z_y) \geq v(z_y)$. Since $u <v$ on $B_r(x_0) \cap \partial \{u >\phi \}$ it must hold that $z_y \in \{u > \phi\} \cap \partial B_r(x_0)$ We conclude the proof by continuity, by letting $y \to x_0$.

\end{proof}

As mentioned before, the porosity of the free boundary is a consequence of the non-degeneracy in the homogeneous case:

\begin{proof}[{\bf Proof of Corollary \ref{Cor3}}]
Let $x_0\in\partial\{u>\phi\}\cap \tilde{\Omega}$ and pick $r$ small enough so that $B_{2r}(x_0)\subset\subset \tilde{\Omega}$. By Theorem \ref{ThmNonDegHom} we have that there exists some $y\in\partial B_r(x_0)$ such that
\begin{equation}\label{porosity1}
u(y)-\phi(y)\geq c.r^2
\end{equation}
for some (universal) constant $c$.

On the other hand, the growth control proved in Theorem \ref{MThm1} gives
\begin{equation}\label{porosity2}
u(y)-\phi(y)\leq C.\big(\textrm{dist}(y,\partial\{u>\phi\})\big)^2.
\end{equation}

\eqref{porosity1} and \eqref{porosity2} together imply
\begin{equation}\label{porosity3}
\textrm{dist}(y,\partial\{u>\phi\})>\left(\frac{c}{C}\right)^{1/2}r=:\tilde{C}.r
\end{equation}
and taking $\delta \defeq \frac{\tilde{C}}{4}$ we obtain the that $B_{2\delta r}(y)\cap B_{2r}(x_0)\subset \{u>\phi\}\cap \tilde{\Omega}$ and the result is proved.
\end{proof}

\section{Appendix}\label{Append}

In this Appendix we gather, for the reader's convenience, the statements of two fundamental results that have been cited in the proof of Lemma \ref{Lem1}, namely the Weak Harnack inequality and the Local Maximum Principle:

\begin{thm}[{\bf Weak Harnack inequality, \cite[Theorem 2]{Imb}}]\label{wharn}
Let $u$ be a non-negative continuous function such that
$$
   F_0(x, Du, D^2u)\leq 0 \quad\textrm{ in }\quad B_1
$$
in the viscosity sense (i.e. $u$ is a viscosity super-solution). Assume that $F_0$ is uniformly elliptic in the $X$ variable (recall \eqref{eq.pucci}) and $F_0 \in C^0(B_1\times \left(\R^N \setminus B_{\mathrm{M}_{\mathrm{F}}}\right)\times \text{Sym}(n))$ for some $\mathrm{M}_{\mathrm{F}} \geq 0$. Further assume that
\begin{equation}\label{eq.elliwh}
   |\xi|\geq \mathrm{M}_{\mathrm{F}} \quad \text{and} \quad F_0(x, \xi, X)\leq 0 \quad \Longrightarrow \quad \mathcal{M}^-_{\lambda,\Lambda}(X)-\sigma(x)|\xi|-f_0(x)\leq 0.
\end{equation}
for continuous functions $f_0$ and $\sigma$ in $B_1$. Then, for any $q > n$
\[
   \|u\|_{L^{p_0}\left(B_{\frac{1}{4}}\right)}\leq C.\left(\inf_{B_{\frac{1}{2}}} u+\max\left\{\mathrm{M}_{\mathrm{F}}, \|f_0\|_{L^n(B_1)}\right\}\right)
\]
for some (universal) $p_0>0$ and a constant $C>0$ depending on $n, q, \lambda, \Lambda$ and $\|\sigma\|_{L^q(B_1)}$.
\end{thm}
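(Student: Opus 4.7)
The plan is to follow the classical Krylov-Safonov/Caffarelli road map to the weak Harnack inequality, adapted to the degenerate first-order structure as carried out by Imbert. The proof decomposes into three stages, starting from an ABP-type estimate and moving through a critical density lemma and then a Calderón-Zygmund cube iteration.

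First, I would establish an Alexandrov-Bakelman-Pucci (ABP) estimate adapted to the degenerate operator: for a nonnegative viscosity supersolution $u$ of $F_0(x,Du,D^2u)\leq 0$ in $B_1$,
$$
\sup_{B_{1/2}} u^{-} \leq C\,\bigl(\, M_{F} + \|f_0\|_{L^n(B_1)} \,\bigr),
$$
modulo a lower-order term in $u^{-}$ coming from $\sigma$. This is done by sliding paraboloids from below and running the usual contact-set argument on the concave envelope of $-u^{-}$. The structural hypothesis \eqref{eq.elliwh} is pivotal: at every contact point $x$ with touching slope $p$, whenever $|p|\geq M_F$ one inherits the uniformly elliptic pointwise inequality $\mathcal{M}^{-}_{\lambda,\Lambda}(D^2 u)(x)\leq \sigma(x)|p|+f_0(x)$, while the complementary case $|p|< M_F$ contributes only a bounded remainder of size $M_F$ that is harmless to the ABP bound.

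Second, combining the ABP estimate with an explicit radial barrier (a smooth function coinciding with a cusped paraboloid on an annulus, as in Caffarelli-Cabré), I would prove the critical density estimate: after normalizing so that $\inf_{B_{1/2}}u\leq 1$ and $\max\{M_F,\|f_0\|_{L^n(B_1)}\}\leq 1$, there exist universal $M_0,\mu>0$ such that
$$
\bigl|\{u\leq M_0\}\cap B_{1/4}\bigr|\geq \mu.
$$
The argument is by contradiction: if the set were too small, comparing $u$ with the barrier via the ABP estimate of Stage 1 would force $\inf_{B_{1/2}}u>1$. Once this is in hand, the Calderón-Zygmund cube decomposition (stacked cubes lemma) iterates the density estimate to the decay $|\{u>t\}\cap B_{1/4}|\leq C\,t^{-p_0}$ for some universal $p_0>0$. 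Integrating this distributional bound yields the weak-$L^{p_0}$ (hence $L^{p_0}$ after possibly lowering the exponent) estimate, and an initial rescaling so that $\inf_{B_{1/2}}u+\max\{M_F,\|f_0\|_{L^n(B_1)}\}=1$ delivers the stated inequality.

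The main obstacle will be Stage 1. The interaction between the drift-type lower-order term $\sigma(x)|\xi|$ with $\sigma\in L^q(B_1)$, $q>n$ (rather than the standard $L^n$), and the degeneracy below the threshold $M_F$, forces a refined ABP proof: one needs Fabes-Stroock type weighted Aleksandrov estimates to accommodate $L^q$ coefficients (which accounts for the dependence of $C$ on $\|\sigma\|_{L^q(B_1)}$), and one must carefully keep $M_F$ as a lower-order correction in every pointwise step so that it only enters additively on the right-hand side, preserving the universality of the final constants. The rest of the scheme (barrier construction and Calderón-Zygmund iteration) is robust enough that, once the correct ABP is obtained, it follows \emph{mutatis mutandis} from the classical uniformly elliptic theory.
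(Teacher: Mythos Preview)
Your three-stage outline (ABP estimate $\Rightarrow$ critical density lemma $\Rightarrow$ Calder\'on--Zygmund iteration yielding the $L^\varepsilon$ decay) is indeed the strategy the paper points to, namely Imbert's adaptation of the Caffarelli--Cabr\'e scheme. But you have misidentified where the real difficulty lies, and this is a genuine gap.

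You flag Stage~1 (ABP with the $M_F$ cutoff and the $L^q$ drift $\sigma$) as ``the main obstacle'' and declare that the Calder\'on--Zygmund iteration in Stage~3 ``follows \emph{mutatis mutandis} from the classical uniformly elliptic theory.'' This is precisely the trap in Imbert's original argument, and the paper explicitly records it: the gap is in the proof of the $L^\varepsilon$ estimate, not in the ABP. The issue is scaling. To iterate the density lemma on dyadic cubes one rescales $v(x)=K^{-1}u(x_0+rx)$; the equation for $v$ carries a new gradient threshold $rM_F/K$, so the degeneracy parameter drifts at every step of the induction. Knowing the critical density estimate only for one fixed $M_F$ is not enough to close the loop. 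The repair, carried out in Imbert--Silvestre \cite{IS2} and summarized in the Appendix here, is to pass to the ``Pucci extremal operators for large gradients'' $\widetilde{\mathcal{M}}^{\pm}_{\lambda,\Lambda}$ and prove the $L^\varepsilon$ estimate \emph{uniformly} for all thresholds $\tau\leq\varepsilon_0$ universal; this uniformity in the cutoff is exactly what survives the rescaling. Your proposal must either build in this uniformity from the outset or invoke \cite[Theorem~5.1]{IS2} directly; as written, the claim that Stage~3 is routine is the one place where the argument actually breaks.
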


\begin{thm}[{\bf Local Maximum Principle, \cite[Theorem 3]{Imb}}] \label{localmax}
Let $u$ be a continuous function satisfying
$$
   F_0(x, Du, D^2u)\geq 0 \quad\textrm{ in }\quad B_1
$$
in the viscosity sense (i.e. $u$ is a viscosity sub-solution). Assume that $F_0$ is uniformly elliptic in the $X$ variable (recall \eqref{eq.pucci}) and $F_0 \in C^0(B_1\times \left(\R^N \setminus B_{\mathrm{M}_{\mathrm{F}}}\right)\times \text{Sym}(n))$ for some $\mathrm{M}_{\mathrm{F}} \geq 0$. Further assume that
\begin{equation}\label{eq.elliplmp}
   |\xi|\geq \mathrm{M}_{\mathrm{F}} \quad \text{and} \quad F_0(x, \xi, X)\geq 0 \quad \Longrightarrow \quad \mathcal{M}^+_{\lambda,\Lambda}(X)+\sigma(x)|\xi|+f_0(x)\geq 0.
\end{equation}
for continuous functions $f_0$ and $\sigma$ in $B_1$. Then, for any $p>0$ and $q > n$
\[
\sup_{B_{\frac{1}{4}}}u\leq C.\left(\|u^+\|_{L^p\left(B_{\frac{1}{2}}\right)}+\max\left\{\mathrm{M}_{\mathrm{F}}, \|f_0\|_{L^n(B_1)}\right\}\right)
\]
where $C>0$ is a universal constant depending on $n, q, \lambda, \Lambda,\|\sigma\|_{L^q(B_1)}$ and $p$.
\end{thm}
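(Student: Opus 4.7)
My strategy is to adapt the classical Caffarelli--Cabr\'e argument for the Local Maximum Principle to the gradient-degenerate setting, leveraging the structural implication \eqref{eq.elliplmp}. The proof decomposes into three ingredients: (i) an ABP-type upper bound for viscosity subsolutions of the degenerate operator, (ii) a barrier construction on an annulus producing a measure-to-sup decay estimate, and (iii) a Calder\'on--Zygmund-type iteration that upgrades the measure estimate into the claimed $L^p$-to-$L^\infty$ bound.

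For (i), I would analyze the concave envelope $\Gamma$ of $u^+ \chi_{B_{1/2}}$ in $B_1$. On the contact set $\{u=\Gamma\}\cap B_{1/2}$, the smooth envelope $\Gamma$ touches $u$ from above, so by the viscosity subsolution definition $F_0(x, D\Gamma, D^2\Gamma) \geq 0$ there. Splitting the contact set according to whether $|D\Gamma|\geq \mathrm{M}_{\mathrm{F}}$ or not: on the first part, \eqref{eq.elliplmp} collapses to the uniformly elliptic Pucci-type inequality $\mathcal{M}^+_{\lambda,\Lambda}(D^2\Gamma) + \sigma|D\Gamma| + f_0 \geq 0$, permitting the usual area-formula bound via $\det D^2\Gamma$; on the second part, the Lipschitz control $|D\Gamma|\leq \mathrm{M}_{\mathrm{F}}$ contributes only a term of order $\mathrm{M}_{\mathrm{F}}$ after integration. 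Combining the two pieces yields
\[
\sup_{B_{1/2}} u \;\leq\; \sup_{\partial B_{1/2}} u^+ + C \bigl(\mathrm{M}_{\mathrm{F}} + \|f_0\|_{L^n(\{u=\Gamma\})}\bigr),
\]
exactly along the lines of Imbert's ABP theorem.

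For (ii), I would build a barrier $\Phi$ on $\overline{B_1}$ of the form $\Phi(x) = c_1(|x|^{-\alpha}-\kappa)$, smoothly modified near the origin, with $\Phi \leq 0$ on $\partial B_1$, $\Phi \geq 2$ on $B_{1/4}$, $\mathcal{M}^+_{\lambda,\Lambda}(D^2\Phi) \leq 0$ on $B_1\setminus B_{1/4}$, and crucially $|D\Phi| \geq C_\alpha c_1 \geq \mathrm{M}_{\mathrm{F}}$ on the annulus $B_{3/4}\setminus B_{1/4}$. Applying the ABP estimate from step (i) to $w = u - \Phi$ on $B_1\setminus B_{1/4}$ yields the key measure-to-sup estimate: there exist universal $\mu, c_0 \in (0,1)$ such that, after suitable normalization,
\[
\sup_{B_{1/4}} u \;\geq\; 1 \quad\Longrightarrow\quad \bigl|\{u \geq \mu\}\cap B_{1/2}\bigr| \;\geq\; c_0,
\]
provided $\max\{\mathrm{M}_{\mathrm{F}}, \|f_0\|_{L^n(B_1)}\}$ is dominated by a small universal constant.

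Finally, rescaling this measure estimate to dyadic subcubes and iterating through the Calder\'on--Zygmund decomposition lemma of \cite[Section 4.2]{CC95} upgrades it to a power decay $|\{u>t\}\cap B_{1/4}| \leq C t^{-\varepsilon}$ for $t>1$, and a layer-cake representation of $\|u^+\|_{L^p(B_{1/2})}$ then delivers the claimed $L^p$-to-$L^\infty$ bound after homogeneity-tracking. The main obstacle is step (ii): the barrier must be engineered so that $|D\Phi|$ is bounded from below by $\mathrm{M}_{\mathrm{F}}$ throughout the annular region where ABP is invoked, for otherwise the implication \eqref{eq.elliplmp} is vacuous and no useful Pucci inequality for $w$ is available. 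The explicit choice $\Phi(x)=c_1(|x|^{-\alpha}-\kappa)$ with $\alpha$ large enough (e.g.\ $\alpha> n\Lambda/\lambda - 1$, which makes $\Phi$ a $\mathcal{M}^+$-subsolution on the annulus) resolves this, and a preliminary scaling $u \mapsto u/\tau$ with $\tau \gg \max\{\mathrm{M}_{\mathrm{F}},\|f_0\|_{L^n(B_1)}\}$ ensures the smallness condition needed to close the iteration.
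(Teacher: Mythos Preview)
Your plan follows the same Caffarelli--Cabr\'e/Imbert route that the paper itself invokes: ABP estimate, barrier-based measure lemma, Calder\'on--Zygmund iteration to the $L^\varepsilon$ estimate, and then an interpolation step to reach the $L^p$--$L^\infty$ bound. In that sense the strategy is correct and matches the paper's sketch.

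However, you should be aware that the paper explicitly flags a gap in exactly the part you treat as routine. The Appendix states that Imbert's original proof of the $L^\varepsilon$ estimate \eqref{eq.lep} is incomplete, and that the repair is carried out in Imbert--Silvestre \cite{IS2} via the ``Pucci operators for large gradients'' framework $\widetilde{\mathcal{M}}^{\pm}$. Your proposal does not engage with this subtlety. In particular, your step~(ii) claims that engineering $|D\Phi|\geq \mathrm{M}_{\mathrm{F}}$ on the annulus is enough to make \eqref{eq.elliplmp} applicable, but this is not the right quantity: when you apply the viscosity subsolution condition to $u$ at a contact point for $w=u-\Phi$, the test function touching $u$ from above is $\Gamma_w+\Phi$, and the gradient entering \eqref{eq.elliplmp} is $|D\Gamma_w+D\Phi|$, not $|D\Phi|$. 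Since $D\Gamma_w$ is only controlled by $\sup w^+$ and the geometry of the domain, it can partially cancel $D\Phi$; more seriously, the smallness of the effective threshold has to survive the dyadic rescalings in the Calder\'on--Zygmund iteration, and tracking this through the scaling is precisely where Imbert's original argument broke down. The fix in \cite{IS2} reformulates the problem so that the operators $\widetilde{\mathcal{M}}^{\pm}$ (which are $\pm\infty$ when $|Du|<\tau$) scale correctly, and establishes the $L^\varepsilon$ lemma only under the proviso $\tau\leq\varepsilon_0$ universal. Your final sentence (``a preliminary scaling $u\mapsto u/\tau$\dots ensures the smallness condition needed to close the iteration'') gestures at this, but the actual verification that the smallness persists at every dyadic scale is the nontrivial content of \cite{IS2} and cannot be waved through.

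A minor inconsistency: you first require $\mathcal{M}^{+}_{\lambda,\Lambda}(D^2\Phi)\leq 0$ on the annulus and then say your choice of $\alpha$ makes $\Phi$ an $\mathcal{M}^+$-subsolution; for the radial profile $|x|^{-\alpha}$ with $\alpha$ large one gets $\mathcal{M}^{-}_{\lambda,\Lambda}(D^2\Phi)\geq 0$, which is the inequality actually needed in the barrier step.
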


Both of these results were proved originally in Imbert's paper \cite{Imb}, following the strategy of the uniformly elliptic case, see \cite[Section 4.2]{CC95}. This strategy is based on the so called $L^\varepsilon$ Lemma, which gives a polynomial decay for the measure of the super-level sets of a nonnegative supersolution for the Pucci extremal operator $\mathcal{M}^+_{\lambda,\Lambda}$ (see the upper cited Caffarelli-Cabr\'{e}'s book for precise statements):
\begin{equation}\label{eq.lep}
\left|\left\{x \in B_1:u(x)>t\right\}\cap B_1\right|\leq C.t^{-\varepsilon}.
\end{equation}

In Imbert's paper, unfortunately, there is a gap in the proof of \eqref{eq.lep}, which is made up for in a subsequent paper with Silvestre, see \cite{IS2}, where an appropriate $L^\varepsilon$ estimate is completely addressed. In fact, their proof holds for ``Pucci extremal operators for large gradients'' defined, for a fixed $\tau$, by:
\[
\widetilde{\mathcal{M}}^+_{\lambda,\Lambda}(D^2u,Du)\defeq
\left\{\begin{array}{ll}
\mathcal{M}^+_{\lambda,\Lambda}(D^2 u)+\Lambda|Du| & \text{ if }|Du|\geq \tau \\
+\infty & \text{ otherwise }
\end{array}\right.
\]
\[
\widetilde{\mathcal{M}}^-_{\lambda,\Lambda}(D^2u,Du)\defeq
\left\{\begin{array}{ll}
\mathcal{M}^-_{\lambda,\Lambda}(D^2 u)-\Lambda|Du| & \text{ if }|Du|\geq \tau \\
-\infty & \text{ otherwise }
\end{array}\right.
\]
The $L^\varepsilon$ estimate is proved to hold whenever $\tau\leq \varepsilon_0$ universal (see, \cite[Theorem 5.1]{IS2}). Notice that the ellipticity condition $\widetilde{\mathcal{M}}^-_{\lambda,\Lambda}$ is consistent with \eqref{eq.elliwh} if we take $\sigma(x)\equiv\Lambda$; more precisely, if \eqref{eq.elliwh} and $u$ is a supersolution for $F_0$, then it is also a supersolution for $\widetilde{\mathcal{M}}^-_{\lambda,\Lambda}$ with right hand side $f_0$. An analogous reasoning is valid for $\widetilde{\mathcal{M}}^+_{\lambda,\Lambda}$ and \eqref{eq.elliplmp}.

Once the $L^\varepsilon$ is obtained, the proof of Theorem \ref{wharn} is exactly as the one in \cite{Imb} which is, in turn, a modification of the uniformly elliptic case in \cite[Theorem 4.8, a]{CC95}. As for Theorem \ref{localmax}, it also follows from \eqref{eq.lep} by first assuming that the $L^\varepsilon$ norm of $u^+$ is small and the obtaining the general result by interpolation. Indeed, the smallness of the $L^\varepsilon$ norm readily implies \eqref{eq.lep} which in turn gives $u$ is bounded (see, \cite[Lemma 4.4]{CC95}, which is adapted in \cite[Section 7.2]{Imb}).

Our class of operators fits in this scenario by setting
\[
   F_0(x,Dv,D^2v)=|Dv|^\gamma F(D^2v)-f(x).
\]
and with
\[
f_0(x)\defeq \frac{\|f\|_{L^\infty(B_1)}}{\varepsilon_0^\gamma} \quad \text{for suitable} \quad \varepsilon_0>0.
\]
Indeed, we have that whenever
\[
|Dv|^\gamma F(D^2v)\leq f(x) \quad \text{in} \quad B_1
\]
in the viscosity sense, the ellipticity of $F$ ensures that
\[
\mathcal{M}^-_{\lambda,\Lambda}(D^2v)\leq F(D^2v)\leq \frac{f(x)}{|Dv|^\gamma}
\]
whenever $|Du|\geq\mathrm{M}_{\mathrm{F}} = \varepsilon_0>0$ so that
\[
\mathcal{M}^-_{\lambda,\Lambda}(D^2v)-\Lambda|Dv|-f_0(x)\leq \left(\frac{1}{|Dv|^\gamma}-\frac{1}{\varepsilon_0^\gamma}\right)\|f\|_{L^\infty(B_1)}\leq 0.
\]

Now setting
\[
v(x) \defeq u(x)+\varepsilon_0\quad \text{with } \,\,\, \varepsilon_0\in \left(0, \frac{\mathrm{T}_0}{2}\right]
\]
for some suitable $\mathrm{T}_0>0$ to be chosen \textit{a posteriori}. Moreover, the constants obtained in \cite{IS2} are monotone with respect to $\tau$ and bounded away from 0 and $+\infty$, so we get a uniform estimate as \eqref{eq.lep} for supersolutions of $\mathcal{G}[v]\defeq |Dv|^\gamma F(D^2v)$.

Therefore, in such a situation we have (recall $\sigma(x)\equiv\Lambda$) from Theorem \ref{wharn}:
 {\scriptsize{
 \begin{equation}\label{EqWHN}
      \|u\|_{L^{p_0}\left(B_{\frac{1}{4}}\right)}\leq\left\{
 \begin{array}{rcl}
   \displaystyle C.\left\{\inf_{B_{\frac{1}{2}}} u + \left[(\gamma+1) \sqrt[n]{|B_1|}\|f\|_{L^{\infty}(B_1)}\right]^{\frac{1}{\gamma+1}}\right\} & \text{if} & \varepsilon_0 \leq \frac{ \sqrt[n]{|B_1|}\|f\|_{L^{\infty}(B_1)}}{\varepsilon_0^{\gamma}} \\
   \displaystyle C.\left(\inf_{B_{1}} u + \mathrm{T}_0\right) & \text{if} & \varepsilon_0 > \frac{ \sqrt[n]{|B_1|}\|f\|_{L^{\infty}(B_1)}}{\varepsilon_0^{\gamma}},
 \end{array}
 \right.
 \end{equation}}}
where we have used in the first inequality that the function $\mathfrak{h}(t) = t + \frac{ \sqrt[n]{|B_1|}\|f\|_{L^{\infty}(B_1)}}{t^{\gamma}}$ for $t>0$ is optimized (its lowest upper bound) when $t_0 = \left( \gamma\sqrt[n]{|B_1|}\|f\|_{L^{\infty}(B_1)}\right)^{\frac{1}{\gamma+1}}$.

In conclusion, if $1\leq p_0\leq \infty$ we have from Minkowski's Inequality
$$
  \|u\|_{L^{p_0}\left(B_{\frac{1}{2}}\right)} \leq \|v\|_{L^{p_0}\left(B_{\frac{1}{2}}\right)} + C(p_0, n, \mathrm{T}_0)\leq C(\text{RHS in} \,\,\eqref{EqWHN}, p_0, n, \mathrm{T}_0).
$$
Otherwise, if $0< p_0<1$ by using the inequality (see, \cite{Day40})
$$
   \|\omega_1+\omega_2\|_{L^{p_0}(\Omega)}\leq 2^{\frac{1-p_0}{p_0}}\left(\|\omega_1\|_{L^{p_0}(\Omega)}+\|\omega_2\|_{L^{p_0}(\Omega)}\right)
$$
we get
$$
   \|u\|_{L^{p_0}\left(B_{\frac{1}{2}}\right)} \leq C(\text{RHS in} \,\,\eqref{EqWHN}) + C(p_0, n, \mathrm{T}_0),
$$
thereby obtaining the desired estimate.

Similarly, for Theorem \ref{localmax}, if
\[
|Du|^\gamma F(D^2u)\geq f(x) \quad \text{in} \quad B_1
\]
in the viscosity sense we again have
\[
\mathcal{M}^+_{\lambda,\Lambda}(D^2u)\geq F(D^2u) \geq \frac{f(x)}{|Du|^{\gamma}} \geq -\frac{\|f\|_{L^{\infty}(B_1)}}{|Du|^{\gamma}}\,\,\,\text{whenever}\,\,\, \varepsilon_0  = \mathrm{M}_{\mathrm{F}} \leq |Du|,%\left(\frac{\mathrm{T}_0-\varepsilon_0}{\mathrm{T}_0^2}\right)^\gamma f(x)
\]
and we can set, similarly as above, $f_0(x)\defeq \frac{\|f\|_{L^{\infty}(B_1)}}{\varepsilon_0^{\gamma}}
$
to get
\[
\mathcal{M}^+_{\lambda,\Lambda}(D^2v)+\Lambda|Dv|+f_0(x)\geq \left(\frac{1}{\varepsilon_0^\gamma}-\frac{1}{|Du|^\gamma}\right)\|f\|_{L^\infty(B_1)} \geq 0,
\]
where $v(x) \defeq u(x)+\frac{\varepsilon_0}{\max\left\{1, 2^{\frac{1-p}{p}}\right\}|B_1|^{\frac{1}{p}}}\quad \text{with } \,\,\, \varepsilon_0\in \left(0, \frac{\mathrm{T}_0}{2}\right]$.

Therefore, in such a setting we have from Theorem \ref{localmax}:
$$
\begin{array}{ccl}
 \displaystyle \sup_{B_{\frac{1}{2}}}u & \leq & \displaystyle \sup_{B_{\frac{1}{2}}} v\\
  & \leq & C.\left(\max\left\{1, 2^{\frac{1-p}{p}}\right\}\|u^+\|_{L^p(B_1)}+\varepsilon_0+ \max\left\{\varepsilon_0, \sqrt[n]{|B_1|}\frac{\|f\|_{L^{\infty}(B_1)}}{\varepsilon_0^{\gamma}}\right\}\right)\\
   & \defeq & \Xi,
\end{array}
$$
where, as before, we can estimate
{\scriptsize{
\begin{equation}\label{EqLMP}
\Xi\leq\left\{
 \begin{array}{rcl}
   \displaystyle C.\left\{\max\left\{1, 2^{\frac{1-p}{p}}\right\}\|u^+\|_{L^p(B_1)} + \left[(\gamma+1) \sqrt[n]{|B_1|}\|f\|_{L^{\infty}(B_1)}\right]^{\frac{1}{\gamma+1}}\right\} & \text{if} & \varepsilon_0 \leq \frac{ \sqrt[n]{|B_1|}\|f\|_{L^{\infty}(B_1)}}{\varepsilon_0^{\gamma}} \\
   \displaystyle C.\left(\max\left\{1, 2^{\frac{1-p}{p}}\right\}\|u^+\|_{L^p(B_1)} + \mathrm{T}_0\right) & \text{if} & \varepsilon_0 \geq \frac{ \sqrt[n]{|B_1|}\|f\|_{L^{\infty}(B_1)}}{\varepsilon_0^{\gamma}},
 \end{array}
 \right.
\end{equation}}}
thereby finishing the analysis.

\begin{rem}
Let us stress that for a source term with sign constraint, i.e. $f(x) \geq 0$, we are able to obtain a more direct estimate. Indeed, if
\[
|Du|^\gamma F(D^2u)\geq f(x) \quad \text{in} \quad B_1
\]
in the viscosity sense, we have
\[
\mathcal{M}^+_{\lambda,\Lambda}(D^2u)\geq F(D^2u) \geq \frac{f(x)}{|Du|^{\gamma}} \geq \left(\frac{\mathrm{T}_0-\varepsilon_0}{\mathrm{T}_0^2}\right)^\gamma f(x),
\]
whenever $\varepsilon_0 = \mathrm{M}_{\mathrm{F}} \leq |Du| \leq \frac{\mathrm{T}_0^2}{\mathrm{T}_0-\varepsilon_0}$ and we can set, $f_0(x)\defeq -\left(\frac{T_0-\varepsilon_0}{T_0^2}\right)^\gamma f(x)$
to get
\[
   \mathcal{M}^+_{\lambda,\Lambda}(D^2u)+\Lambda|Du|+f_0(x)\geq 0.
\]

Therefore, we have from Theorem \ref{localmax}:
$$
\begin{array}{ccl}
 \displaystyle \sup_{B_{\frac{1}{2}}}u & \leq & C.\left(\|u^+\|_{L^p(B_1)}+\max\left\{\varepsilon_0, \left(\frac{T_0-\varepsilon_0}{T_0^2}\right)^\gamma\|f\|_{L^n(B_1)}\right\}\right)\\
   & \leq & C.\left(\|u^+\|_{L^p(B_1)}+\max\left\{\frac{\mathrm{T}_0}{2}, \frac{1}{\mathrm{T}_0^{\gamma}}\|f\|_{L^{n}(B_1)}\right\}\right)
\end{array}
$$
\end{rem}

In conclusion, taking into account that we wish to prove regularity estimates along free boundary points $x_0 \in \partial \{u>\phi\}\cap \Omega$ and $|Du(x_0)|=|D\phi(x_0)|$, it is enough to choose $\mathrm{T}_0 = \|\phi\|_{C^{1, \beta}(B_1)}+1$. Furthermore, in both previous estimates, namely \eqref{EqWHN} and \eqref{EqLMP} the corresponding involved constants do not degenerate as $\varepsilon_0$ goes to zero.

\subsection*{Acknowledgments.} This work was partially supported by Consejo Nacional de Investigaciones Cient\'{i}ficas y T\'{e}cnicas (CONICET-Argentine) and Coordena\c{c}\~{a}o de Aperfei\c{c}oamento de Pessoal de N\'{i}vel Superior (PNPD/UnB-Brazil) Grant No. 88887.357992/2019-00 and CNPq-Brazil under Grant No. 310303/2019-2. J.V. da Silva thanks FCEyN/CEMIM from Universidad Nacional de Mar del Plata for its warm hospitality and for fostering a pleasant scientific atmosphere during his visit where part of this work was written.

The authors would like to thank the anonymous Referee whose insightful comments and suggestions helped make this paper more clear and mathematically accurate.

\end{document}